\g@addto@macro{\UrlBreaks}{\UrlOrds}
\theoremstyle{definition}
\newtheorem{defi}{Definition}[section]
\newtheorem{thm}[defi]{Theorem}
\newtheorem{lem}[defi]{Lemma}
\newtheorem{cor}[defi]{Corollary}
\newtheorem{quest}[defi]{Question}
\newtheorem{claim}[defi]{Claim}
\newtheorem{fact}[defi]{Fact}
\DeclareMathOperator{\cov}{cov}
\DeclareMathOperator{\cof}{cof}
\DeclareMathOperator{\Bor}{Bor}
\title{Borel chromatic numbers of closed graphs and forcing with uniform trees}
\author{Michel Gaspar and Stefan Geschke}
\date{}
\begin{document}

\maketitle

\begin{abstract}
    In this work we continue the tradition initiated in \cite{geschke2011} of viewing the uncountable Borel chromatic number of analytic graphs as cardinal invariants of the continuum. We show that various uncountable Borel chromatic numbers of closed graphs can be consistently different, as well as consistently equal to the continuum. This is done using arguments that are typical to Axiom A forcing notions.
\end{abstract}

\section{Introduction}

A \textit{graph} on a set $X$ is a symmetric irreflexive relation $G \subseteq X^2$. A set $A \subseteq X$ is \textit{$G$-independent} iff $A^2 \cap G = \emptyset$. The \textit{chromatic number} of $G$, $\chi(G)$, is the least cardinality of a family of $G$-independent sets covering the underlying space $X$. 

Now if $X$ is endowed with a Polish topology, we may look at $G$-independent sets that have additional complexity (e.g., Borel, compact, Baire measurable etc). In this case, the \textit{Borel chromatic number} of $G$, $\chi_B (G)$, is the least cardinality of a family of $G$-independent Borel sets covering the space $X$. These numbers were extensively studied in \cite{kechris1999borel} from a ZFC standpoint. We are only interested in graphs with \textit{uncountable} Borel chromatic number.

The study of uncountable Borel chromatic numbers as cardinal invariants of the continuum was proposed in \cite{geschke2011}. In his approach, the author proves that it is consistent to have the continuum arbitrarily big, while many definable graphs have Borel chromatic number at most $\aleph_1$. Namely, there is a ccc extension of the universe such that any closed graph $G$ on a Polish space $X$ has Borel chromatic number at most $\aleph_1$ --- provided it has no perfect cliques --- while the continuum is arbitrarily big. Next we introduce two such graphs that play an important role in this work: $G_0$ and $G_1$.

Let $(s_k)_{k \in \omega}$ be a sequence of elements of $2^{<\omega}$ such that
\begin{itemize}
    \item $|s_k| = k$, and
    \item for every $s \in 2^{<\omega}$, there is $k \in \omega$ such that $s_k \supseteq s$.
\end{itemize}
Then $G_0$ is the graph on $2^\omega$ defined as
\[G_0 = \{(s_k^\frown i^\frown x, s_k^\frown (1-i)^\frown x)\ |\ k \in \omega \text{ and } x \in 2^\omega\}.\]
This is a closed acyclic graph with uncountable Borel chromatic number. In fact, it follows from the proof of Proposition 6.2 of \cite{kechris1999borel} that any Baire measurable $G_0$-independent subset of $2^\omega$ is meager. Hence, 

\begin{equation}\label{covMG0}
 \cov(\mathcal{M}) \leq \chi_B (G_0).   
\end{equation}

The \textit{$G_0$-dichotomy} (see Fact \ref{g0dich}) says this is the least possible uncountable Borel chromatic number for an analytic graph. In effect, if $G$ is an analytic graph on a Polish space $X$, then either $\chi_B (G) \leq \aleph_0$ or there exists a continuous homomorphism from $G_0$ to $G$. Ben Miller in \cite{miller2012graph} showed that this dichotomy implies many well-known descriptive set-theoretic dichotomies (e.g., the perfect set property, or Suslin dichotomy about coanalytic relations).

It was briefly mentioned in \cite{kechris1999borel} that a measure-analog of inequality \ref{covMG0} would hold for $G_0$. Namely, that $\chi_B (G_0) \geq \cov(\mathcal{N})$. However, this is actually open. In fact, it is not true that Lebesgue measurable $G_0$-independent sets have Lebesgue measure zero: using Theorem 3.3 of \cite{miller2008measurable} one easily gets a $G_0$-independent positive-measure $F_\sigma$ subset of $2^\omega$ (for more, see discussion after Question \ref{questrandom}). On the other hand, for the more homogeneous graph $G_1$, the measure-analog of the fact above holds. 

Let $G_1$ be the graph on $2^\omega$ defined by
\[G_1 = \{(x, y)\ |\ \exists ! n (x(n) \neq y(n))\}.\]
Since any Lebesgue measurable $G_1$-independent subset of $2^\omega$ has measure zero, then

\begin{equation}\label{covNG1}
\cov(\mathcal{N}) \leq \chi_B (G_1).  
\end{equation}

Now recall the definiton of \textit{Vitali's equivalence relation} $E_0$:
\[x E_0 y \leftrightarrow \forall^\infty n (x(n) = y(n)).\]
This is the least non-smooth Borel equivalence relation and $G_0$ can be seen as the graph-analog of $E_0$; and the \textit{Glimm-Effros dichotomy} (see Theorem 1.1. of \cite{harrington1990glimm}) is the analog of the $G_0$-dichotomy: it says that $E_0$ embeds continuously into any non-smooth (i.e., not Borel reducible to the identity) Borel equivalence relation on a Polish space. 

Let $\chi_B (E_0)$ denote the least cardinality of a family of Borel $E_0$-transversals covering $2^\omega$ --- we shall refer to it as the Borel chromatic number of $E_0$. From the observation that the connected components of $G_0$ and $G_1$ are equivalence classes of $E_0$, it readily follows that 

\begin{equation}\label{G0toE0}
\chi_B (G_0) \leq \chi_B (G_1) \leq  \chi_B (E_0).
\end{equation}

It follows from \cite{geschke2011} that there is a $\sigma$-centered extension of the universe such that $\chi_B (E_0) = \aleph_1$. Furthermore, in this extension any locally countable $F_\sigma$-graph has Borel chromatic number at most $\aleph_1$.

Finally, we have

\begin{equation}\label{rG1}
    \chi_B (G_1) \leq \mathfrak{r},
\end{equation}

and

\begin{equation}\label{ramsey}
\mathfrak{h} \leq \chi_B (G_1),
\end{equation}

where $\mathfrak{r}$ is the \textit{reaping number} and $\mathfrak{h}$ is the \textit{distributivity} (or \textit{shattering}) \textit{number} (see the correspondent sections about $\mathfrak{r}$ and $\mathfrak{h}$ in Chaper 8 of \cite{halbeisen2012combinatorial}).

The inequality (\ref{rG1}) is implicit in the proof of Lemma 3 of \cite{brendle1995}. As for $(\ref{ramsey})$, let $r^0$ denote the $\sigma$-ideals of \textit{completely Ramsey-null sets}. This may be seen as the $\sigma$-ideal subsets of $[\omega]^\omega$ that are meager in the \textit{Ellentuck topology} (see, e.g., 1.3. of \cite{Brendle1994StrollingTP}).

We know that $\mathfrak{h} \leq \cov(\Bor([\omega]^\omega) \cap r^0)$ (see, e.g., Theorem 9.2 of \cite{halbeisen2012combinatorial}). Moreover, \cite{halbeisen2003making} refined the Ellentuck topology, defining the \textit{doughnut topology}, and proved that there exists a correspondence between the Borel sets that are meager in this topology and Borel $G_1$-independent sets (see Section 0 and Fact 1.4. of \cite{halbeisen2003making}). 

Combining (\ref{G0toE0}), (\ref{rG1}) and (\ref{ramsey}) together with known inequalities between cardinals from \textit{van Douwen's} diagram (see, e.g., \cite{khomskii2012regularity}), we obtain the following diagram:

\begin{figure}[H]
    \centering
    \includegraphics[scale=0.60]{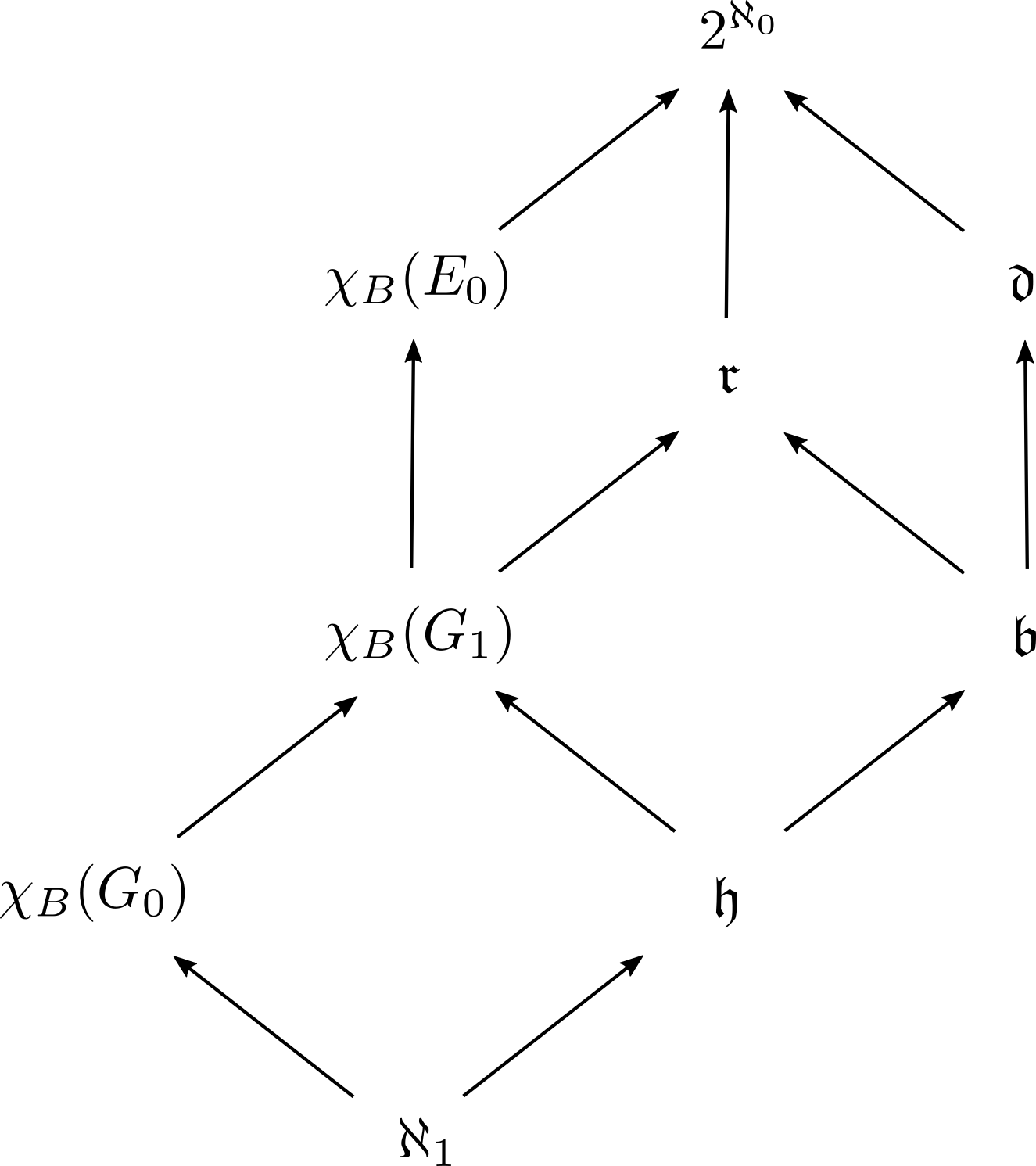}
    \caption{Borel chromatic numbers added to van Douwen's diagram}
    \label{BCNdiagram}
\end{figure}

The ultimate goal is to show the consistency of the strict inequality between any two cardinals in this diagram. 

For our porpuses, the inequalities (\ref{covMG0}) and (\ref{covNG1}) are omitted from the above diagram. The consistency of $\chi_B (G_0) > \mathfrak{d}$ will imply $\chi_B (G_0) > \cov(\mathcal{M})$. The consistency of $\chi_B (G_0) < \cov(\mathcal{N})$ is open (see Question \ref{questrandom}).

As consequences of our methods, we prove the consistencies of $\chi_B (G_0) < \chi_B (G_1)$, $\chi_B (G_1) < \chi_B (E_0)$, and $\chi_B (G_0) > \mathfrak{d}$.

Part of this work, namely Theorem \ref{silverthm} item a. was independently solved by \cite[Corollary 3.49]{zapletal2004descriptive}, as well as a slightly weaker version of item b. (see  \cite[Corollary 3.38]{zapletal2019hypergraphs}). The methods used in there are, however, very indirect and completely different from the ones used here.

\section{Preliminaries: forcing notions of perfect trees}

Recall that a forcing notion $\mathbb{P}$ satisfies \textit{Axiom A} if there exists $(\leq_n)_{n \in \omega}$, a non-$\subseteq$-decreasing sequence of partial orders of $\mathbb{P}$, with the following properties:
    \begin{itemize}
        \item[(1)] if $(p_n)_{n \in \omega}$ is a sequence such that $p_{n+1} \leq_n p_n$, for all $n \in \omega$, then there exists $q \in \mathbb{P}$ such that $q \leq_n p$, for all $n \in \omega$; and
        
        \item[(2)] if $A \subseteq \mathbb{P}$, $p \in \mathbb{P}$ and $n \in \omega$, then there exists $q \leq_n p$ compatible with at most countably many elements of $A$.
    \end{itemize}
A sequence $(p_n)_{n \in \omega}$  as in item (1) is called a \textit{fusion sequence}. If $q$ in item (2) can be chosen to be compatible with at most \textit{finitely} many elements of $A$, we say that $\mathbb{P}$ satisfies the \textit{strong Axiom A}.

It is well-known that Axiom A forcing notions are proper. Moreover, strong Axiom A forcing notions are additionally $\omega^\omega$-bounding (see, e.g., Corollary 2.1.12 from Theorem 2.1.4 of \cite{roslanowski1998norms}).

One very appealing aspect of Axiom A forcing notions is how convenient they are when working with countable support iterations: 

For an ordinal $\alpha \geq 1$, let $\mathbb{P}_{\alpha}$ denote the $\alpha$-iteration of $\mathbb{P}$ with countable support.

Let $F$ be a finite subset of $\alpha$ and $\eta: F \rightarrow \omega$. Say that $q \leq_{F, \eta} p$ iff  
\[\forall \gamma \in F \left(q \restriction \gamma \Vdash q(\gamma) \leq_{\eta(\gamma)} p(\gamma)\right).\]

A sequence $(p_n)_{n \in \omega}$ in $\mathbb{P}_\alpha$ is a \textit{(iterated) fusion sequence} iff there exist sequences $(F_n)_{n \in \omega}$, and $(\eta_n)_{n \in \omega}$ such that, for every $n \in \omega$,
    
    \begin{itemize}
        \item[(1)] $F_n \subseteq \alpha$ is finite; 
        \item[(2)] $F_n \subseteq F_{n+1}$;
        \item[(3)] $\eta_n: F_n \rightarrow \omega$;
        \item[(4)] $\eta_n (\gamma) \leq \eta_{n+1} (\gamma)$, for all $\gamma \in F$;
        \item[(5)] $p_{n+1} \leq_{F_n, \eta_n} p_n$; and
        \item[(6)] for all $\gamma \in \mathrm{supp}(p_n)$, there is $m \in \omega$ such that $\gamma \in F_m$ and $\eta_m (\gamma) \geq n$.
    \end{itemize}
    
The fusion $q$, of a fusion sequence $(p_n)_{n \in \omega}$, is defined recursively such that
\[\forall \gamma < \alpha \left(q \restriction \gamma \Vdash q(\gamma) \text{ is the fusion of } (p_n (\gamma)\right))_{n \in \omega}.\]

We shall introduce the two main examples of (strong) Axiom A forcing notions that will be used throughout the text: the $E_0$ and the \textit{Silver} forcing notions. Before this, we need to fix some notation:

The simplest example of strong Axiom A is the Sacks forcing: 

A tree $p \subseteq 2^{<\omega}$ is a \textit{Sacks tree} iff it is a \textit{perfect tree} --- i.e.,  for every $t \in p$, there exists $s \supseteq t$ such that $s^\smallfrown 0, s^\smallfrown 1 \in p$. 

The \textit{Sacks forcing}, $\mathbb{S}$, consists of Sacks trees ordered by direct inclusion. 

For a perfect tree $p \subseteq 2^{<\omega}$, let $\mathrm{st}(p)$ denote its stem; and $\mathrm{spl}(p)$ denote its set of splitting nodes. There exists a natural bijection there exists a bijection $\sigma \mapsto \sigma^*$ from $2^{<\omega}$ to $\mathrm{spl}(p)$, described as follows:
$\emptyset^* = \mathrm{st}(p)$; and $(\sigma^\smallfrown i)^* \in \mathrm{spl} (p)$ is the minimal splitting node of $p$ such that $(\sigma^\smallfrown i)^* \supsetneq {\sigma^*}^\smallfrown i$, for each $i < 2$.

Let $L_n (p) = \left\{\sigma^* \ |\ \sigma \in 2^n\right\}$ denote the \textit{$n$-th splitting level of $p$}, for each $n \in \omega$; and for $\sigma \in 2^{<\omega}$, let $p \ast \sigma = \left\{s \in p\ |\ \sigma^* \subseteq s \text{ or } s \subseteq \sigma^*\right\}$ denote the \textit{restriction of $p$ to $\sigma$}. We may also denote the restriction of $p$ to one of its splitting nodes $t \in p$ by $p_{t}$ (i.e., for $\sigma \in 2^{<\omega}$, $p_{\sigma^*} = p \ast \sigma$).

A sequence $(\leq_n)_{n \in \omega}$ of partial orders of $\mathbb{S}$ is defined as follows: for every $n \in \omega$,
\[q \leq_n p \leftrightarrow q \leq p \text{ and } L_n (q) = L_n (p).\]

This way, if $(p_n)_{n \in \omega}$ is a fusion sequence witnessed by $(\leq_n)_{n \in \omega}$, then its fusion is $q = \bigcap_{n \in \omega} p_n$.

Now we introduce the two main examples of strong Axiom A that will be considered throughout this text. These are forcing notions of perfect trees for which the sequence $(\leq_n)$, restricted to them, witnesses strong Axiom A. 

Last, let $[p]$ denote the set of branches throught $p$ --- i.e., of all $x \in 2^{\omega}$ such that $x \restriction n \in p$, for all $n \in \omega$.

A tree $p \subseteq 2^{<\omega}$ is an \textit{$E_0$-tree} iff it is perfect; and for every $s \in \mathrm{spl}(p)$, there are $s_0 \supseteq s^\smallfrown 0$ and $s_1 \supseteq s^\smallfrown 1$, of the same length, such that
\[\left\{x \in 2^\omega\ |\ s_0^\smallfrown x \in [p]\right\} = \left\{x \in 2^\omega\ |\ s_1^\smallfrown x \in [p]\right\}.\]
The \textit{$E_0$-forcing}, $\mathbb{E}_0$, consists of $E_0$-trees ordered by inclusion.

A tree $p \subseteq 2^{<\omega}$ is a \textit{Silver tree} iff it is an $E_0$-tree such that, for every $s \in \mathrm{spl}(p)$,
\[\left\{x \in 2^\omega\ |\ s^\smallfrown 0^\smallfrown x \in [p]\right\} = \left\{x \in 2^\omega\ |\ s^\smallfrown 1^\smallfrown x \in [p]\right\}.\]
The \textit{Silver forcing}, $\mathbb{V}$, consists of Silver conditions ordered by inclusion.

As said earlier, $(\leq_n \restriction\ \mathbb{E}_0)_{n \in \omega}$ witnesses strong Axiom A for $\mathbb{E}_0$; and $(\leq_n \restriction\ \mathbb{V})_{n \in \omega}$ witnesses strong Axiom A for $\mathbb{V}$. 

The \textit{$\mathbb{E}_0$-model} is the generic extension obtained by forcing with an $\omega_2$-iteration of $\mathbb{E}_0$, with countable support, over a model of CH. The \textit{Silver model} is defined similarly.

\section{Separating Borel chromatic numbers}

In this section we tackle the problem of the consistency of $\chi_B (G_1) < \chi_B (E_0)$ and $\chi_B (G_0) < \chi_B (G_1)$. 

The next fact shows $\mathbb{E}_0$ and $\mathbb{V}$ increase $\chi_B (E_0)$ and $\chi_B (G_1)$, respectively. For this, let $I_{E_0}$ and $I_{G_1}$ denote the $\sigma$-ideals generated by Borel $E_0$-transversals, and by Borel $G_1$-independent sets, respectively.

\begin{fact}[Lemmas 2.3.29 and 2.3.37 of \cite{zapletal2004descriptive}]\label{zapsilver} Let $A \subseteq 2^\omega$ be an analytic set. 
\begin{itemize}
    \item[(a)] If $A \notin I_{E_0}$, then it contains the set of branches of an $E_0$-tree.
    \item[(b)] If $A \notin I_{G_1}$, then it contains the set of branches of a Silver tree.
\end{itemize}
\end{fact}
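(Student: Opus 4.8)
Both items can be obtained by one Sacks-style fusion: one builds the desired tree $p$ together with a decreasing sequence of $\Sigma^1_1$ sets staying out of the relevant ideal, splitting at each step. I phrase the two cases in parallel, the only difference being that for Silver trees the splitting coordinates must be chosen uniformly across each level and one at a time, whereas for $E_0$-trees a finite block of coordinates may be flipped at a given splitting node.

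For finite $F\subseteq\omega$ let $f_F\colon 2^\omega\to 2^\omega$ be the homeomorphism flipping exactly the coordinates in $F$; note $f_F(x)\mathrel{E_0}x$, that $f_F$ sends Borel partial $E_0$-transversals to Borel partial $E_0$-transversals, and (for $F$ a singleton) Borel $G_1$-independent sets to Borel $G_1$-independent sets, so $f_F$ preserves $I_{E_0}$ and, for singletons, $I_{G_1}$. The engine of the proof is the following \emph{splitting lemma} for (a): if $C$ is $\Sigma^1_1$, $C\subseteq[u]$ with $u\in 2^{<\omega}$, and $C\notin I_{E_0}$, then there is a finite nonempty $F\subseteq[\,|u|,\infty)$ with $C\cap f_F[C]\notin I_{E_0}$. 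Granting it, set $D:=C\cap f_F[C]$, an $f_F$-invariant $\Sigma^1_1$ set not in $I_{E_0}$; choose a length $\ell>\max F$ and a node $v\supseteq u$ of length $\ell$ with $v(\min F)=0$ and $D\cap[v]\notin I_{E_0}$, and put $C_{\sigma^\frown 0}:=D\cap[v]$, $C_{\sigma^\frown 1}:=f_F[D\cap[v]]=D\cap[f_F(v)]$; since $f_F$ preserves $I_{E_0}$, both stay out of $I_{E_0}$, they split at a node extending $u$, and $C_{\sigma^\frown 1}=f_F[C_{\sigma^\frown 0}]$. Keeping this relation along every later branch — which is legitimate because all later flips use coordinates above $\max F$, hence commute with $f_F$ — forces the continuation trees above the two sides of the splitting node to be literally equal, which is precisely the $E_0$-tree condition. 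For (b) one runs the recursion level by level on the single set $B_n=\bigcup_{\sigma\in 2^n}C_\sigma$, applying the singleton version of the lemma to obtain one coordinate $k_n$ with $B_n\cap f_{k_n}[B_n]\notin I_{G_1}$ (while preserving invariance under all previously chosen $f_{k_j}$); the resulting object is a partial function with cosplitting set $\{k_n:n\in\omega\}$, hence a Silver tree.

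To prove the splitting lemma, assume $C\cap f_F[C]\in I_{E_0}$ for every finite nonempty $F\subseteq[\,|u|,\infty)$. There are countably many such $F$, so $\bigcup_F(C\cap f_F[C])$ is covered by a Borel set $N\in I_{E_0}$ and $E:=C\setminus N$ is $\Sigma^1_1$ and not in $I_{E_0}$. If $x,y\in E$ are distinct and $E_0$-related, then $F':=\{n:x(n)\neq y(n)\}$ is finite, nonempty and $\subseteq[\,|u|,\infty)$ (as $x,y\in[u]$), so $x\in C\cap f_{F'}[C]$, contradicting $x\in E$; thus $E$ is an analytic partial $E_0$-transversal, i.e.\ $E^2$ misses the Borel set $E_0\setminus\Delta$. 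By the $\Sigma^1_1$ separation theorem in its rectangle form there is a Borel $B\supseteq E$ with $B^2\cap(E_0\setminus\Delta)=\emptyset$, i.e.\ a Borel partial $E_0$-transversal, so $E\in I_{E_0}$ — contradiction. The $G_1$-version is the same, except that the ``bad'' coordinates of pairs in $E$ are exactly the finitely many split coordinates used so far (all below $|u|$); fixing their values on a slice of $E$ yields a $\Sigma^1_1$ $G_1$-independent set, which by the same reflection lies in $I_{G_1}$.

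The recursion delivers a genuine $E_0$-tree (resp.\ Silver tree) $p$, and it remains to ensure $[p]\subseteq A$. This is the usual wrinkle of Sacks-type constructions with analytic sets: one fixes a closed $R\subseteq\omega^\omega\times 2^\omega$ with $A=\operatorname{proj}_{2^\omega}R$, runs the whole construction on conditions that are products of a basic clopen subset of $\omega^\omega$ with a $\Sigma^1_1$ subset of $A$, simultaneously builds a tree on $\omega$ so that each branch of the final combined object lies in $R$, and arranges convergence in both coordinates; then $[p]\subseteq\operatorname{proj}_{2^\omega}R=A$. I expect the real work, beyond this routine closure bookkeeping, to be the splitting lemma — in particular making a single coordinate serve uniformly at each level in the Silver case while keeping invariance under all earlier flips, and the reflection step showing that analytic partial $E_0$-transversals, and analytic $G_1$-independent sets, already belong to $I_{E_0}$ and $I_{G_1}$ respectively.
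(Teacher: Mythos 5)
The paper offers no proof of this Fact --- it is quoted directly from Zapletal --- but your argument is correct and follows essentially the same flip-and-fuse strategy that Zapletal uses and that the paper itself reproduces for the analogous $G_0$-tree dichotomy in Section 4 (Claim \ref{fatclaim} there is exactly your splitting lemma, with an $I^t_{G_0}$-positive subset of $A\cap(A+1_{n})$ in place of $C\cap f_F[C]$). The two points you single out as carrying the real weight --- the reflection step showing that analytic partial $E_0$-transversals (resp.\ analytic $G_1$-independent sets) already belong to the ideal, and the closure bookkeeping against a tree representation of $A$ needed to get $[p]\subseteq A$ --- are indeed where the content sits, and both go through as you describe.
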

It follows that the map $p \mapsto [p]$ is a dense embedding from the forcing to the poset of Borel positive (with respect to the ideal) sets, which is a forcing notion known to increase the Borel chromatic number of the graph. 

\begin{thm}\label{silverthm} Let $G$ be a closed graph defined on a compact Polish space $X$. 

\begin{itemize}
    \item[(a)]  If $G$ has no perfect cliques, then every point of $X$ in the $E_0$-extension is contained in a compact $G$-independent set coded in the ground-model; and
    \item[(b)] if $G$ has no $4$-cycles, then every point of $X$ in the Silver extension is contained in a compact $G$-independent set coded in the ground-model.
\end{itemize}
In each of the above cases, $\chi_B (G) \leq |2^{\aleph_0} \cap V|$.
\end{thm}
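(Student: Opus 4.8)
The plan is to prove one ``local'' statement about the iterations and then derive Theorem~\ref{silverthm} from it, running parts (a) and (b) in parallel and differing only in the forbidden configuration (perfect clique versus $4$-cycle) and the type of tree ($E_0$-tree versus Silver tree). Work in the $\mathbb{E}_0$-model (resp.\ the Silver model) $V_{\omega_2}$ and fix $x\in X$. Since the iteration is proper, a standard reflection argument gives a countable $I\subseteq\omega_2$ with $x\in V[G\restriction I]$, where $V[G\restriction I]$ is a countable-support iteration of $\mathbb{E}_0$ (resp.\ $\mathbb{V}$) over $V$ of length $\mathrm{otp}(I)<\omega_1$. So it suffices to prove: for every countable-support iteration $\mathbb{P}$ of $\mathbb{E}_0$ (resp.\ $\mathbb{V}$) of countable length, every $\mathbb{P}$-name $\dot x$ for a point of $X$, and every $p\in\mathbb{P}$, there are $q\le p$ and a compact $G$-independent $K\subseteq X$ coded in $V$ with $q\Vdash\dot x\in K$. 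Granting this, in $V_{\omega_2}$ every point of $X$ lies in some compact $G$-independent $K$ coded in $V$; there are at most $|2^{\aleph_0}\cap V|$ such sets, and --- crucially using that $X$ is compact --- each stays compact and stays $G$-independent in $V_{\omega_2}$, since emptiness of the closed set $G\cap K^2\subseteq K^2$ is witnessed by a \emph{finite} cover of the compact set $K^2$ by basic open boxes disjoint from $G$, a finitary and hence absolute fact. As these $K$ are Borel and cover $X$, we conclude $\chi_B(G)\le|2^{\aleph_0}\cap V|$.

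For the local statement, the first step is continuous reading of names. Since $\mathbb{E}_0$ and $\mathbb{V}$ satisfy strong Axiom A (hence are $\omega^\omega$-bounding) and carry the iterated fusion set up in the Preliminaries, below any $p$ one finds $q_0\le p$ and a continuous $f\colon [q_0]\to X$ coded in $V$ --- here $[q_0]$ is the ground-model compact space of generic objects compatible with $q_0$, a closed subset of a countable product of copies of $2^\omega$ --- such that $q_0\Vdash\dot x=f(\dot g)$ for $\dot g$ the canonical name of the generic. It is then enough to produce $q_1\le q_0$ for which $K:=f[[q_1]]$ is $G$-independent: $K$ is compact (continuous image of a compact set), coded in $V$, and $q_1\Vdash\dot x\in K$. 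If $f$ is constant on $[q_0]$ we are done (a singleton is $G$-independent by irreflexivity), so assume otherwise.

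The heart is then a combinatorial refinement carried out by an (iterated) fusion $q_1=\bigcap_n q_1^n$, $q_1^{n+1}\le_n q_1^n$, maintaining the invariant that the finitely many $n$-th-level pieces $[q_1^n\ast\sigma]$ have pairwise $G$-separated $f$-images of $f$-diameter $\le 2^{-n}$; this invariant immediately yields that $f[[q_1]]$ is $G$-independent. The step $n\to n+1$ asks, inside each piece --- itself the branch space of an $E_0$-tree (resp.\ Silver tree) --- to carve two subpieces placed above the successors of a common splitting node exactly as the tree structure demands (shifted copies of one another for $\mathbb{E}_0$; identical tails for $\mathbb{V}$) with $G$-separated $f$-images. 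If for some piece no admissible splitting achieves this, and --- composing refinements --- none does even after passing to a subpiece, one extracts the forbidden configuration: in case (a) a secondary fusion, using closedness of $G$ to pass to limits along the tree, produces a perfect set of pairwise $G$-adjacent, pairwise $f$-distinct branches, i.e.\ a perfect $G$-clique, contradicting the hypothesis; in case (b) the far greater rigidity of Silver trees (flipping one splitting coordinate leaves all tails fixed) forces the persistent bad pairs to close up, around a single square determined by two splitting coordinates, into four $G$-edges on four $f$-distinct points, i.e.\ a $4$-cycle, again a contradiction.

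The main obstacle is precisely this last extraction. One must show that ``no admissible splitting $G$-separates the two sides'' is \emph{persistent} under further refinement, and that the resulting cross-edges can be aligned coherently --- for (a) across all pairs of a perfect tree of subtrees, for (b) around a single square --- rather than merely existing piecemeal. For the Silver case I expect this to require a Ramsey/Hales--Jewett-type stabilization of the pattern of coordinates on which a persistent bad pair of branches differs, so that the bad pairs genuinely assemble into a $4$-cycle and not a longer even cycle; this is also what pins down $4$-cycle-freeness (rather than, say, triangle-freeness) as the correct hypothesis in item~(b).
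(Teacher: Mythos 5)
Your overall architecture --- reduce to a countable-length iteration, read the name continuously, and run a fusion whose successor step is a dichotomy ``either the two sides of a split can be $G$-separated, or the forbidden configuration appears'' --- is exactly the skeleton of the paper's proof (the paper packages the iteration via a tree of possibilities $T_q(\dot x)$ and a notion of $(F,\eta)$-faithfulness rather than via a global continuous $f$ on the branch space, and handles arbitrary points of $X$ at successor steps by the Grigorieff minimality fact, but these differences are cosmetic). The problem is that the one step you yourself identify as ``the main obstacle'' --- extracting a perfect clique, resp.\ a $4$-cycle, when separation persistently fails --- is precisely the mathematical content of the theorem, and you do not prove it; you conjecture it follows from ``a Ramsey/Hales--Jewett-type stabilization.'' That is a genuine gap, and your proposed route for the Silver case (aligning persistent bad pairs ``around a single square determined by two splitting coordinates'') runs into a real difficulty: to close a square you need the cross-pairs at \emph{two} splitting coordinates to be $G$-edges simultaneously, and density of the bad behaviour only hands you one coordinate at a time, on subconditions whose two halves need not cohere.

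The paper resolves both points with a cleaner dichotomy. Say $q$ \emph{agrees with} $G$ if $([q\ast 0]\times[q\ast 1])\cap E_0\subseteq G$ (replace $E_0$ by $G_1$ for Silver trees). Lemma~\ref{lemdense} shows: either the set $D(G)$ of agreeing conditions fails to be dense below $p$, in which case a single fusion (using closedness of $G$ to spread witnesses $([s_0]\times[s_1])\cap G=\emptyset$ over all splitting pairs) produces $q\le p$ with $[q]$ $G$-independent; or $D(G)$ is dense below $p$. In the latter case one works with a \emph{single} agreeing condition $r$, not with a persistent family. For (a), closedness of $G$ upgrades ``$E_0$-pairs across the split are edges'' to $[r\ast 0]\times[r\ast 1]\subseteq G$, and a fusion of agreeing conditions yields a perfect clique. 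For (b), the Silver structure gives an involution $z\mapsto z'$ between $[r\ast 0]$ and $[r\ast 1]$ with every $(z,z')$ a $G$-edge; if some edge $(z,w)$ in $[r\ast 0]$ has $(z',w')$ also an edge, then $z,w,w',z'$ is a $4$-cycle, and otherwise $z\mapsto z'$ anti-preserves edges, so the finite Ramsey theorem applied to $R(4)$ points of $[r\ast 0]$ yields either a $4$-clique (hence a $4$-cycle) on one side or a $4$-independent set whose primed image is a $4$-clique on the other. No Hales--Jewett and no coherence across refinements is needed. You should also record explicitly why a ground-model compact $G$-independent set remains $G$-independent in the extension (your finite-subcover observation is the right one, and the paper leaves it implicit), but without a proof of the extraction lemma the argument as submitted is incomplete.
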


\begin{cor}
It is consistent with ZFC that $\chi_B (G_1) < \chi_B (E_0)$ and $\chi_B (G_0) < \chi_B (G_1)$.
\end{cor}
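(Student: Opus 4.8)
The plan is to realize the two consistency statements in two different extensions of a model of CH: $\chi_B(G_0)<\chi_B(G_1)$ in the Silver model, and $\chi_B(G_1)<\chi_B(E_0)$ in the $\mathbb{E}_0$-model. In each case the extension is the length-$\omega_2$ countable support iteration of the relevant strong Axiom~A forcing, which is proper and $\aleph_2$-c.c., forces $2^{\aleph_0}=\aleph_2$, and has the property that every real of the extension lies in some $V[G_\alpha]$ with $\alpha<\omega_2$. Since each of $\chi_B(G_0),\chi_B(G_1),\chi_B(E_0)$ lies between $\aleph_1$ (all three have uncountable Borel chromatic number; cf.\ \eqref{covMG0}, \eqref{covNG1}, \eqref{G0toE0}) and $2^{\aleph_0}=\aleph_2$, it suffices in each model to decide whether the relevant invariant equals $\aleph_1$ or $\aleph_2$.

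For the values that should be $\aleph_2$, I would invoke Fact~\ref{zapsilver} and the observation after it: $p\mapsto[p]$ densely embeds $\mathbb{V}$ into the poset of Borel $I_{G_1}$-positive sets and $\mathbb{E}_0$ into the poset of Borel $I_{E_0}$-positive sets, so the $\mathbb{V}$-generic real over any model $W$ avoids every Borel $G_1$-independent set coded in $W$, and the $\mathbb{E}_0$-generic real avoids every Borel $E_0$-transversal coded in $W$. The usual reflection argument for $\aleph_2$-c.c.\ iterations then finishes it: any purported cover $\{B_\xi:\xi<\omega_1\}$ of $2^\omega$ by such sets in $V[G_{\omega_2}]$ has all of its codes appearing by some stage $\alpha<\omega_2$ (as $\cf(\omega_2)>\omega_1$), the relevant property of $B_\xi$ is $\Pi^1_1$ in its code and hence survives down to $V[G_\alpha]$, and the stage-$\alpha$ generic real is a real of $V[G_{\omega_2}]$ lying in no $B_\xi$. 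This gives $\chi_B(G_1)=\aleph_2$ in the Silver model and $\chi_B(E_0)=\aleph_2$ in the $\mathbb{E}_0$-model.

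For the values that should be $\aleph_1$: $G_0$ is closed and acyclic, hence has no $4$-cycles, so Theorem~\ref{silverthm}(b) applied in the Silver model with $X=2^\omega$ yields $\chi_B(G_0)\le|2^{\aleph_0}\cap V|=\aleph_1$. It remains to see that $\chi_B(G_1)\le\aleph_1$ in the $\mathbb{E}_0$-model. Theorem~\ref{silverthm} does not apply to $G_1$ verbatim because $G_1$ is only $F_\sigma$; however $G_1=\bigcup_{n\in\omega}F_n$ with $F_n=\{(x,y):x(n)\ne y(n)\text{ and }x\restriction(\omega\setminus\{n\})=y\restriction(\omega\setminus\{n\})\}$ a clopen perfect matching, so each $F_n$ is a closed graph without perfect cliques and without $4$-cycles. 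A diagonal fusion handling $F_n$ at stage $n$ of the construction — the method for locally countable $F_\sigma$-graphs — then places every real of the $\mathbb{E}_0$-model into a ground-model compact $G_1$-independent set, so $\chi_B(G_1)\le\aleph_1$. (Alternatively one can shortcut via \eqref{rG1}: $\chi_B(G_1)\le\mathfrak r$, and $\mathfrak r=\aleph_1$ in the $\mathbb{E}_0$-model, since $\mathbb{E}_0$ is $\omega^\omega$-bounding and, like Sacks and Silver forcing, preserves P-points — a property inherited by countable support iterations of proper forcings — so the ground-model reals remain a reaping family.)

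Combining these, the Silver model satisfies $\aleph_1=\chi_B(G_0)<\chi_B(G_1)=\aleph_2$ and the $\mathbb{E}_0$-model satisfies $\aleph_1=\chi_B(G_1)<\chi_B(E_0)=\aleph_2$, which is the corollary (the two inequalities are obtained in separate models; a single model with $\chi_B(G_0)<\chi_B(G_1)<\chi_B(E_0)$ would require $2^{\aleph_0}\ge\aleph_3$). The step I expect to be genuinely delicate is the $\aleph_1$-bound on $\chi_B(G_1)$ in the $\mathbb{E}_0$-model: because $G_1$ is not closed, Theorem~\ref{silverthm} cannot be used directly, and one must either redo its fusion argument along $G_1=\bigcup_nF_n$ (maintaining $F_m$-independence of the image while improving $F_n$-independence) or fall back on the preservation fact that $\mathfrak r=\aleph_1$ in that model.
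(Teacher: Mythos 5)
Your architecture is exactly the paper's: the corollary is obtained in two separate models, with $\chi_B(G_1)=\mathfrak{c}=\aleph_2$ in the Silver model and $\chi_B(E_0)=\aleph_2$ in the $\mathbb{E}_0$-model via Fact~\ref{zapsilver}, the dense embedding $p\mapsto[p]$, and the standard reflection argument, while the $\aleph_1$ upper bounds come from Theorem~\ref{silverthm}. The one place you diverge is the bound $\chi_B(G_1)\le\aleph_1$ in the $\mathbb{E}_0$-model, and there the detour is unnecessary: the paper's notion of ``closed graph'' can only mean closed off the diagonal (equivalently, $G\cup\Delta$ closed), since $G_0$ itself --- explicitly called a closed graph, and the subject of the whole $\mathbb{G}_0$ section --- fails to be closed in $X^2$ for the same reason $G_1$ does (pairs whose unique disagreement position tends to infinity converge to the diagonal), and every use of closedness in Lemmas~\ref{lemdense} and~\ref{mainlemma1} is at off-diagonal pairs. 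Under that reading $G_1$ is a closed graph with no cliques of size $3$ (if $x,y$ differ only at $n$ and $x,z$ only at $m$, then $y,z$ differ at $\{n\}\triangle\{m\}$), hence no perfect cliques, and Theorem~\ref{silverthm}(a) applies to it verbatim --- this is precisely how the paper gets $\chi_B(G_1)\le\aleph_1$ in the $\mathbb{E}_0$-model. Your two substitutes are of unequal value: the decomposition $G_1=\bigcup_nF_n$ with a diagonal fusion is only a sketch and would require re-proving the entire iteration machinery (Lemmas~\ref{lemdense}--\ref{bookkeeping}) for $F_\sigma$ graphs, so as written it is a gap rather than a proof; the fallback via (\ref{rG1}) and $\mathfrak{r}=\aleph_1$ is a legitimate alternative, but it imports an external preservation theorem (P-point preservation for countable support iterations of $\mathbb{E}_0$) that the paper neither states nor needs. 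Everything else --- the genericity/density argument for the $\aleph_2$ values, the $\Pi^1_1$-absoluteness of independence, the cofinality argument for catching $\aleph_1$ many codes, and the application of Theorem~\ref{silverthm}(b) to the acyclic graph $G_0$ in the Silver model --- is correct and matches the intended proof.
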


Our strategy is to divide the proof in successor and limit steps of iterations. For this, it will be crucial to tackle the case of adding a single generic real.

It will be slightly more convenient to work with graphs defined on $\omega^\omega$ rather than arbitrary Polish spaces. This can be done since any perfect Polish space is the continuous injective image of $\omega^\omega$. If $X$ is a perfect Polish space, $f$ is a continuous injection such that $f[\omega^\omega] = X$, and $G$ is a closed graph on $X$, then the \textit{pull-back of $G$ by $f$}, defined by
\[f^* [G] = \left\{\left(f^{-1} (x),  f^{-1} (y)\right) \in X^2\ |\  (x, y) \in G\right\},\]
is a graph on $X$. Moreover, $G$ has perfect clique iff $f^* [G]$ has a perfect clique; and it has a cycle iff $f^*[G]$ has a cycle.

We first will inspect when the generic real is contained in a compact $G$-independent set coded in the ground model, for $G$ closed on $2^\omega$. 

Let $p$ be an $E_0$-tree and $G$ be a closed graph on $2^\omega$. Say that $q \leq p$ \textit{agrees with} $G$ iff
\[([q \ast 0] \times [q \ast 1]) \cap E_0 \subseteq G.\]
This is similarly defined for Silver trees replacing $E_0$ with $G_1$.

Let $D(G)$ be the set of conditions agreeing with $G$, for either $\mathbb{E}_0$ or $\mathbb{V}$ --- i.e.,
\[D(G) = \{q\ |\ q \text{ agrees with } G\}.\]

\begin{lem}\label{lemdense} Let $G$ be a closed graph on $2^\omega$. Considering either $E_0$ or Silver trees: there exists $q \leq p$ such that $[q]$ is $G$-independent iff $D(G)$ is not dense below $p$.
\end{lem}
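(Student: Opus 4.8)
The plan is to prove the two implications separately. The implication ``there is $q\leq p$ with $[q]$ $G$-independent $\Rightarrow$ $D(G)$ is not dense below $p$'' is short, so I would do it first. Given such a $q$ and any $r\leq q$, write $s=\mathrm{st}(r)$; since $r$ is an $E_0$-tree (resp.\ Silver tree) there are $s_0\supseteq s^\frown 0$ and $s_1\supseteq s^\frown 1$ of equal length with $\{z:s_0^\frown z\in[r]\}=\{z:s_1^\frown z\in[r]\}$ (in the Silver case one may take $s_i=s^\frown i$), so for any $z$ in this common tail set the pair $(s_0^\frown z,s_1^\frown z)$ lies in $([r\ast 0]\times[r\ast 1])\cap E_0$ — and, in the Silver case, in $([r\ast 0]\times[r\ast 1])\cap G_1$, as it then differs in exactly the coordinate $|s|$. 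Since $[r\ast 0]\cup[r\ast 1]\subseteq[r]\subseteq[q]$ is $G$-independent this pair is not an edge, so $r\notin D(G)$; hence $D(G)$ avoids every condition below $q$, and in particular is not dense below $p$.

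For the converse, assume $D(G)$ is not dense below $p$ and fix $p'\leq p$ such that no condition below $p'$ lies in $D(G)$ (so $r\notin D(G)$, and even no $r'\leq r$ lies in $D(G)$, for every $r\leq p'$). I would build $q\leq p'$ with $[q]$ $G$-independent by an iterated fusion $p'=q_0\geq q_1\geq\cdots$ with $q_{n+1}\leq_n q_n$, arranging that for every $n$ and every $\sigma\in 2^n$ the subtree $q_{n+1}\ast\sigma$ — which keeps the stem $\sigma^\ast$ — satisfies $\bigl([q_{n+1}\ast(\sigma^\frown 0)]\times[q_{n+1}\ast(\sigma^\frown 1)]\bigr)\cap G=\emptyset$. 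Since passing to $q_{n+2},q_{n+3},\dots$ only shrinks these two halves (the new $(\sigma^\frown i)^\ast$ being chosen inside them), the fusion $q=\bigcap_n q_n$ then has $\bigl([q\ast(\sigma^\frown 0)]\times[q\ast(\sigma^\frown 1)]\bigr)\cap G=\emptyset$ for all $\sigma$; as any two distinct branches of $[q]$ diverge at a splitting node $\sigma^\ast$ of $q$, this forces $[q]$ to be $G$-independent. Carrying out stage $n$ amounts to the following \emph{local claim}, applied to $r=q_n\ast\sigma\leq p'$ (replacing each $q_n\ast\sigma$ by the resulting $r''$ keeps $q_{n+1}$ a condition with $q_{n+1}\leq_n q_n$): \textbf{if $r$ is an $E_0$-tree (resp.\ Silver tree) with no $r'\leq r$ in $D(G)$, then there is $r''\leq r$ with $\mathrm{st}(r'')=\mathrm{st}(r)$ and $([r''\ast 0]\times[r''\ast 1])\cap G=\emptyset$.}

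To prove the local claim, write $s=\mathrm{st}(r)$. In the Silver case put $\bar W:=\{z:s^\frown 0^\frown z\in[r]\}=\{z:s^\frown 1^\frown z\in[r]\}$, again a Silver tree, so $[r\ast i]=s^\frown i^\frown[\bar W]$, and let $H:=\{(a,b)\in[\bar W]^2:(s^\frown 0^\frown a,s^\frown 1^\frown b)\in G\}$, a closed set. The conditions $r'\leq r$ with stem $s$ correspond exactly to Silver subtrees $W'\subseteq\bar W$ via $[r'\ast i]=s^\frown i^\frown[W']$, and for such $r'$ the only $G_1$-edges between the two halves are the shift pairs $(s^\frown 0^\frown z,s^\frown 1^\frown z)$ with $z\in[W']$; hence $r'$ agrees with $G$ iff $[W']\subseteq E:=\{z\in[\bar W]:(z,z)\in H\}$. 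Since $r\notin D(G)$, taking $W'=\bar W$ shows $E\neq[\bar W]$, so there is $z_0\in[\bar W]$ with $(z_0,z_0)\notin H$; as $H$ is closed, the box $\{(a,b):a\supseteq z_0\restriction k,\ b\supseteq z_0\restriction k\}$ is disjoint from $H$ for some $k$, and letting $W''\subseteq\bar W$ be the (Silver) subtree of all branches of $\bar W$ through $z_0\restriction k$, the stem‑$s$ condition $r''$ corresponding to $W''$ has $([r''\ast 0]\times[r''\ast 1])\cap G=\emptyset$, as required. The $E_0$-case follows the same strategy but is where I expect the real difficulty: after a preliminary restriction normalising the first split of $r$ to its $E_0$-shift $(\bar s_0,\bar s_1)$ with common tail set $\bar W$, and with $H:=\{(a,b):(\bar s_0^\frown a,\bar s_1^\frown b)\in G\}$, the claim reduces to the statement that if $H$ meets $v_0^\frown[W'']\times v_1^\frown[W'']$ for every $E_0$-tree $W''$ (with $|v_0|=|v_1|$ and the tails compatible), then there is an $E_0$-tree $W^\ast\subseteq\bar W$ with $E_0\restriction[W^\ast]\subseteq H$ (which, via Fact~\ref{zapsilver}(a), yields an $r'\leq r$ agreeing with $G$, contradicting the hypothesis). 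The Silver trick of reducing to the diagonal of $H$ is unavailable here because the $E_0$-edges between the two halves of an $E_0$-tree are not just the shift pairs; instead I would run an auxiliary fusion building $W^\ast$ one splitting level at a time, using at each step the failure of agreement on a one-level-deeper subproblem — the hypothesis being self‑similar under restriction — together with the closedness of $G$ to push the $G$-edges between the two halves out to infinity. This $E_0$-analogue of the clean Silver argument is the main obstacle.
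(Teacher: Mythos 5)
Your forward direction and the overall fusion scheme for the converse coincide with the paper's proof, and your Silver-case ``local claim'' is correct --- it is the paper's argument in light disguise (one witness pair, closedness of $G$, restriction to a clopen box). The genuine gap is the $E_0$ case of the local claim, which you explicitly leave unproved and call ``the main obstacle''. That obstacle is an artifact of your setup: you try to understand \emph{all} $E_0$-edges between $[r\ast 0]$ and $[r\ast 1]$ and, in the contrapositive, to produce an $E_0$-tree $W^\ast$ with $E_0\restriction [W^\ast]\subseteq H$ --- a statement essentially as strong as the lemma itself, and not what is needed. The paper's step, which is uniform in the two forcings, uses only that $r=q_{j-1}\ast\sigma_j$ itself fails to agree with $G$: this hands you a \emph{single} pair $(z_0,z_1)\in([r\ast 0]\times[r\ast 1])\cap E_0$ with $(z_0,z_1)\notin G$; closedness of $G$ gives a basic clopen box $[s_0]\times[s_1]$ around it disjoint from $G$; and one then shrinks $r$ to a condition whose two halves lie inside $[s_0]$ and $[s_1]$, after which \emph{every} pair between the halves (not merely the $E_0$-pairs) avoids $G$. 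You run exactly this one-witness-plus-box argument in your Silver case, so the only content missing in the $E_0$ case is the routine check that an $E_0$-tree can be refined, within $\mathbb{E}_0$, so that its two halves enter prescribed clopen neighbourhoods of an $E_0$-related pair of its branches. This is easiest after first passing to the dense set of $E_0$-trees in product normal form $[r]=t^\frown\prod_n\{a_n,b_n\}$ with $|a_n|=|b_n|$: if $z_0=t^\frown a_0{}^\frown v_1{}^\frown v_2\cdots$ and $z_1=t^\frown b_0{}^\frown w_1{}^\frown w_2\cdots$ with $v_n=w_n$ for $n>M$, then freezing the first $M$ blocks along $z_0$ on one side and along $z_1$ on the other yields an $E_0$-subtree with stem $t$ whose halves sit inside $[s_0]$ and $[s_1]$. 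So the missing step is short, but as written your proof of the $E_0$ half does not exist, and the route you sketch for it would be substantially harder than necessary.

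A secondary issue: replacing each $q_n\ast\sigma$, $\sigma\in 2^n$, \emph{independently} by some $r''_\sigma$ does not in general produce a condition --- neither Silverness nor the $E_0$-tree property survives gluing unrelated subtrees above the nodes of a splitting level. As in the paper, one must amalgamate successively, shrinking coordinately above all level-$n$ nodes at each of the $2^n$ steps; this is harmless because the property being arranged, $([q\ast\sigma^\frown 0]\times[q\ast\sigma^\frown 1])\cap G=\emptyset$, is downward persistent, but it needs to be said.
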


\begin{proof}

Trivially, if $q \leq p$ is such that $[q]$ is $G$-independent, then no stronger $r \leq q$ agrees with $G$, which implies that $D(G)$ is not dense below $p$. 

For the converse, assume $D(G)$ is not dense below $p$. We shall construct a fusion sequence $(p_n)_{n \in \omega}$ such that, for every $n \in \omega$ and every $\sigma \in 2^n$,
    \begin{itemize}
        \item[(1)] no $r \leq p_n$ agrees with $G$; and
        \item[(2)] $([p_n \ast \sigma^\smallfrown 0] \times [p_n  \ast \sigma^\smallfrown 1]) \cap G = \emptyset$.
    \end{itemize}

Let $p_0 \leq p$ be such that no stronger condition agrees with $G$. Assume $p_n$ and let $\left\{\sigma_0, ..., \sigma_{m-1}\right\}$ be an enumeration of $2^{n+1}$. We define $p_{n+1}$ in $2^{n+1}$ steps using successive amalgamation: 

Let $q_0 = p_n$ and assume $q_{j-1}$ is already defined for all $j < m-1$. By induction, the condition $q_{j-1} \ast \sigma_j$ does not agree with $G$.  Hence, there exists $(z_0, z_1) \in ([q_{j-1} \ast \sigma_j^\frown 0] \times [q_{j-1} \ast \sigma_j^\frown 1]) \cap E_0$ (or $G_1$, in case of forcing with Silver trees) such that $(z_0, z_1) \notin G$. Since $G$ is a closed graph, let $s_0 \subseteq z_0$ and $s_1 \subseteq z_1$ be such that 
\[([s_0] \times [s_1]) \cap G = \emptyset.\]

Let $q_j \leq_n q_{j-1}$ be such that $\mathrm{st}(q_j \ast \sigma_j^\frown 0) \supseteq s_0$, and $\mathrm{st}(q_j \ast \sigma_j^\frown 1) \supseteq s_1$. Let $p_{n+1} = q_{m-1}$. Then $q = \bigcap_{n \in \omega} p_n$ is a condition such that $[q]$ is a $G$-independent. \qedhere
\end{proof}




\begin{lem}\label{mainlemma1} Let $G$ be a closed graph on $2^\omega$. If $D(G)$ is dense below $p$, 
\begin{itemize} 
    \item[(a)] for the $E_0$ forcing, if $G$ has a perfect clique; and
    \item[(b)] for the Silver forcing, if $G$ has a $4$-cycle.
\end{itemize}
\end{lem}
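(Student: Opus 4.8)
The plan is to build, under the hypothesis that $D(G)$ is dense below some condition $p$, either a perfect clique (case (a)) or a $4$-cycle (case (b)) inside $[p]$. The key observation is that "agreeing with $G$" is a very strong condition: if $q \leq p$ agrees with $G$, then \emph{every} $E_0$-related (resp. $G_1$-related) pair in $[q*0]\times[q*1]$ is a $G$-edge, and both of these sets are nonempty perfect sets sitting inside $[q]$. So I would first record the basic closure properties of $D(G)$ that make a fusion argument go through: if $q \in D(G)$ then $q*\sigma \in D(G)$ for the top splitting node, and by density of $D(G)$ we may always shrink below any splitting node to land back in $D(G)$; moreover since $G$ is closed, the edge relation persists under taking branches.

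For case (a), I would construct a fusion sequence $(p_n)$ with all $p_n \in D(G)$ arranged so that, at every splitting level, the "agrees with $G$" relation forces more and more pairs of branches to be $G$-edges. Concretely, pick $p_0 \leq p$ in $D(G)$; given $p_n$, for each $\sigma \in 2^{n}$ use density of $D(G)$ to thin $p_n * \sigma$ (amalgamating over the finitely many $\sigma$ as in Lemma~\ref{lemdense}) so that the resulting $p_{n+1}$ has each $p_{n+1}*\sigma$ again in $D(G)$. The point is that "$q$ agrees with $G$" says $([q*0]\times[q*1])\cap E_0 \subseteq G$, and any two branches $x \neq y$ of the fusion $q = \bigcap_n p_n$ split at some splitting node $\sigma^*$, where $x$ extends $\sigma^\frown 0$ and $y$ extends $\sigma^\frown 1$; since $q*\sigma \in D(G)$ and $x \mathbin{E_0} y$ would need to hold, I must additionally arrange in the fusion that the two halves below each splitting node are "aligned" so that branches through opposite sides are $E_0$-equivalent — this is exactly what the $E_0$-tree structure ($s_0 \supseteq s^\frown 0$, $s_1 \supseteq s^\frown 1$ of equal length with matching branch sets) buys us. With that alignment maintained through the fusion, \emph{any} two distinct branches of $q$ are $E_0$-related and hence $G$-adjacent, so $[q]$ is a perfect clique.

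For case (b), the Silver forcing has the extra homogeneity that $s^\frown 0$ and $s^\frown 1$ themselves have matching branch sets at every splitting node, so $E_0$-relatedness across a split is automatic and moreover the flipping structure is uniform. Here I do not need a full perfect clique — just a $4$-cycle. I would find a single condition $q \leq p$ in $D(G)$ with at least two splitting levels, and then exhibit four branches $x_{00}, x_{01}, x_{10}, x_{11}$ of $[q]$ (indexed by which side of the first two splitting coordinates they take, the rest of the coordinates being held identical) such that consecutive ones differ in exactly one coordinate, hence are $G_1$-related, hence (by "agrees with $G$" applied at the appropriate splitting nodes, using that $q*\sigma \in D(G)$ for the relevant $\sigma$ by density) are $G$-edges; the four of them close up into a $4$-cycle $x_{00} - x_{01} - x_{11} - x_{10} - x_{00}$, and these four points are genuinely distinct because they differ in the controlled coordinates.

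The main obstacle, and the step deserving the most care, is the \emph{alignment} bookkeeping in case (a): "agrees with $G$" only tells us about $E_0$-related pairs, so to actually produce edges between branches of the fusion I must ensure that distinct branches splitting at a given node really are $E_0$-equivalent, which requires threading the equal-length matching witnesses $s_0, s_1$ from the $E_0$-tree definition consistently through all levels of the fusion and checking that the fusion limit still has this property (it does, because $E_0$-equivalence only depends on tails). Once that is set up, verifying that $D(G)$ membership is preserved under the amalgamation steps is routine, paralleling the construction in Lemma~\ref{lemdense}. Case (b) is easier precisely because Silver's uniformity makes the $G_1$-adjacency of the four chosen branches transparent and no limiting alignment argument is needed.
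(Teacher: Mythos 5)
Your plan for case (a) has a fatal gap. You propose to arrange the fusion so that \emph{any} two distinct branches of $q=\bigcap_n p_n$ are $E_0$-equivalent, and you flag this ``alignment'' as the step deserving most care --- but it is not merely delicate, it is impossible: every $E_0$-class is countable, while $[q\ast\sigma^\frown 0]$ and $[q\ast\sigma^\frown 1]$ are perfect, so if every cross-pair were $E_0$-related then (by transitivity, fixing one branch on the right) each side would lie in a single countable $E_0$-class, a contradiction. The idea you are missing is that no alignment is needed: if $r$ agrees with $G$, then already the \emph{full} rectangle $[r\ast 0]\times[r\ast 1]$ is contained in $G$. Indeed, for $z\in[r\ast 0]$ the $E_0$-tree structure makes the $E_0$-class of $z$ dense in $[r\ast 1]$, so any $(z,w)$ in the rectangle is a limit of pairs $(z,w_n)$ with $w_n\mathbin{E_0}z$; agreement puts each $(z,w_n)$ in $G$, and closedness of $G$ gives $(z,w)\in G$. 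This is exactly how the paper argues, and one then fuses perfect (not necessarily $E_0$-) trees with $[p_n\ast\sigma^\frown 0]\times[p_n\ast\sigma^\frown 1]\subseteq G$ for all $\sigma\in 2^n$ to get a perfect clique. Note this closure trick is genuinely unavailable in the Silver/$G_1$ setting, where a point of $[r\ast 0]$ has exactly one $G_1$-neighbour in $[r\ast 1]$ --- which is why (b) needs a different argument.

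Your case (b) also has a gap, precisely at the amalgamation you call transparent. Your four branches require $q$, $q\ast\langle 0\rangle$ and $q\ast\langle 1\rangle$ all to agree with $G$ simultaneously. Density of $D(G)$ lets you shrink $q\ast\langle 0\rangle$ into $D(G)$, but amalgamating in $\mathbb{V}$ forces the $\langle 1\rangle$-side to become a rational translate of the shrunken $\langle 0\rangle$-side, and $D(G)$ need not be translation-invariant (only $G_1$ is, not $G$); shrinking the $\langle 1\rangle$-side then disturbs the $\langle 0\rangle$-side again, and the process need not terminate. With only $q,\,q\ast\langle 0\rangle\in D(G)$ you obtain the path $x_{10}-x_{00}-x_{01}-x_{11}$ but have no control over the closing edge $(x_{10},x_{11})$. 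The paper avoids this entirely by working with a \emph{single} agreeing condition $r$: writing $z'$ for the $G_1$-copy in $[r\ast 1]$ of $z\in[r\ast 0]$, agreement gives $(z,z')\in G$ for every $z$, and then either some $(z,w)\in[r\ast 0]^2\cap G$ satisfies $(z',w')\in G$, yielding the $4$-cycle $z-z'-w'-w-z$ outright, or edges anti-correlate under the flip and a finite Ramsey argument on $R(4)$ points of $[r\ast 0]$ produces a $4$-cycle on one side or the other. You should replace your two-level density argument with that dichotomy.
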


\begin{proof} 

For (a), using the fact that $D(G)$ is dense below $p$ we will construct a fusion sequence $(p_n)_{n \in \omega}$ of perfect trees (but not necessarily $E_0$-trees) such that, for all $n \in \omega$ and all $\sigma \in 2^n$,
\[[p_n \ast \sigma^\smallfrown 0] \times [p_n \ast \sigma^\smallfrown 1] \subseteq G.\]
Then $q = \bigcap_{n \in \omega} p_n$ will be a perfect tree such that $[q]$ is $G$-clique. 

In order to construct such sequence, simply note that if $r \leq p$ is a condition that agrees with $G$, it will follow from the closedness of $G$ that $[r \ast 0] \times [r \ast 1] \subseteq G$: for every $(z, w) \in [r \ast 0] \times [r \ast 1]$, there exists a sequence $(w_n)_{n \in \omega}$ in $[r \ast 1]$ such that $(z, w_n) \in E_0$ and $(w_n)_{n \in \omega}$ converges to $w$. Since $r$ agree with $G$, then $(z, w_n) \in G$, for every $n \in \omega$ and, since $G$ is a closed graph, $(z, w) \in G$. 

For (b), let $r \leq p$ be a condition that agrees with $G$.

For $z \in [r \ast 0]$, let $z'$ denote the copy of $z$ in $[r \ast 1]$ --- i.e., $(z, z') \in G_1$.

If there exists $(z, w) \in [r \ast 0]^2 \cap G$ such that $(z', w') \in G$, it follows from $(z, z'), (w, w') \in ([r \ast 0] \times [r \ast 1]) \cap G_1 \subseteq G$ that $\{z, z', w, w'\}$ is a $4$-cycle. If this fails, then for all $(z, w) \in [r \ast 0]^2$, 
\[(z, w) \in G \leftrightarrow (z', w') \notin G.\]
It follows from Ramsey theorem that in any set $\left\{z_0, ..., z_{R(4) - 1}\right\} \subseteq [r \ast 0]$ of $R(4)$ vertices (where $R(4)$ denotes the Ramsey number of $4$), either there exists an $4$-cycle; or there exists an independent subset of size $4$, which implies that $\left\{z'_0, ..., z_{R(4) - 1}'\right\} \subseteq [r \ast 1]$ has a $4$-cycle. \qedhere
\end{proof}

From Lemmas \ref{lemdense} and \ref{mainlemma1}, it follows that the $E_0$-real is contained in a compact $G$-independent set coded in the ground model, if $G$ is a closed graph on $2^\omega$ without perfect cliques; and the Silver real is contained in a $G$-independent set coded in the ground model, if $G$ is a closed graph on $2^\omega$ without $4$-cycles. We now need to show that this happens for any other element of $\omega^\omega$ added by $\mathbb{E}_0$ and $\mathbb{V}$, respectively. This is where the minimality of these forcing notions may play a role:

\begin{fact} Let $p \in \mathbb{E}_0$ and $f: [p] \rightarrow \omega^\omega$ be a continuous function. Then there exists $q \leq p$ such that $f \restriction [q]$ is either constant or injective. In particular, $\mathbb{E}_0$ adds reals of minimal degree. 

The same holds true replacing $\mathbb{E}_0$ with $\mathbb{V}$. 
\end{fact}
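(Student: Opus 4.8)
The plan is to reduce everything to a single dichotomy about continuous functions and then read off the minimality statement, mirroring the classical proof that Sacks forcing adds a minimal real. The dichotomy is: for $\mathbb{P}\in\{\mathbb{E}_0,\mathbb{V}\}$, every $p\in\mathbb{P}$, and every continuous $f\colon[p]\to\omega^\omega$, there is $q\le p$ with $f\restriction[q]$ either constant or injective. Granting it, the last sentence of the Fact follows by the usual argument: given a $\mathbb{P}$-name $\dot g$ for a point of $\omega^\omega$ and $p\in\mathbb{P}$, a standard fusion that decides longer and longer initial segments of $\dot g$ along the splitting levels (available since each $\le_n$ fixes the first $n$ splitting levels) produces $q\le p$ and a ground-model continuous $g_*\colon[q]\to\omega^\omega$ with $q\Vdash\dot g=g_*(\dot x_{\mathrm{gen}})$. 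Applying the dichotomy below every $q''\le q$ shows that $\{q'\le q:\ g_*\restriction[q']\ \text{is constant or injective}\}$ is dense below $q$, so some such $q'$ is generic; then either $g\in V$ (constant case), or $g_*\restriction[q']$ is a homeomorphism of the compact set $[q']$ onto its image, so that $\dot x_{\mathrm{gen}}=(g_*\restriction[q'])^{-1}(g)\in V[g]$. Since the generic real codes the generic filter, every real of the extension lies in $V$ or generates the whole extension, i.e.\ the generic real has minimal degree.

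For the dichotomy, if some $q\le p$ has $f\restriction[q]$ constant we are done, so assume $f\restriction[q]$ is non-constant for every $q\le p$; I then construct a fusion $(p_n)_{n\in\omega}$ with $p_{n+1}\le_n p_n$ maintaining the invariant that $f[[p_n\ast\sigma]]\cap f[[p_n\ast\tau]]=\emptyset$ for distinct $\sigma,\tau\in 2^n$. Its fusion $q=\bigcap_n p_n$ then has $f\restriction[q]$ injective, since any two distinct branches of $[q]$ split apart at some splitting level and hence have disjoint $f$-values. Disjointness inherited from level $n$ survives refinement, so the successor step reduces to: after refining above the $n$-th splitting level, the newly created split below each $n$-th-level node $\sigma^*$ separates $f$, i.e.\ $f[[p_{n+1}\ast\sigma^\frown 0]]\cap f[[p_{n+1}\ast\sigma^\frown 1]]=\emptyset$. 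The mechanism that makes this possible is: if a split cannot be separated by any refinement within $\mathbb{P}$, then $f$ is invariant, on a suitable subcondition, under the canonical (partial) homeomorphism exchanging the two sides of that split that is built into the $E_0$-tree, resp.\ Silver-tree, structure; and a continuous function invariant under the exchange homeomorphisms of \emph{all} the splits of a condition is constant on it, by flipping the relevant finite pieces one at a time and passing to the limit. Thus the standing assumption — no subcondition makes $f$ constant — forces, below any condition, the existence of a separable split, which is exactly what the fusion requires.

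The main obstacle is carrying out this separation step \emph{while staying inside $\mathbb{P}$}, and this is where the structures of the two forcings genuinely differ. For $\mathbb{E}_0$ the two sides of a split of an $E_0$-tree are required only to agree \emph{eventually}, which leaves room to maneuver: after first passing to a sub-$E_0$-tree in which this synchronisation is reached along a forced path, one separates the split and checks that the new split again has eventually-agreeing sides, a routine if fussy verification, and the finitely many $n$-th-level nodes can in effect be handled one at a time. For $\mathbb{V}$ the structure is rigid — every splitting node on a given level carries the same subtree — so a single refinement above level $n$ must \emph{simultaneously} separate $f$ at the common next splitting coordinate for all $2^n$ cells cut out by the first $n$ splits. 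Hence the crux of the $\mathbb{V}$ case is a lemma of the following kind: given finitely many continuous functions on a Silver condition, each non-constant on every subcondition, there is a further Silver condition and one of its splitting coordinates $m$ at which all of the functions have their $m$-split separating; the proof exploits that, in a Silver tree, the way $f$ detects a coordinate is uniform across the whole splitting level, so a well-chosen common coordinate together with a single simultaneous clopen restriction suffices. I expect this simultaneous-separation lemma, rather than any part of the fusion bookkeeping, to be the hardest point of the argument.
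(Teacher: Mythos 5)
First, a point of calibration: the paper does not actually prove this Fact --- it cites Lemma 4.7 and Proposition 4.8 of Grigorieff's paper for $\mathbb{V}$ and asserts that the argument adapts to $\mathbb{E}_0$. So you are being compared against a citation, and any complete argument you give is more than the paper supplies. Your overall architecture is the standard one and is sound: continuous reading of names via fusion, the constant-or-injective dichotomy, the density argument, and the observation that in the injective case $f\restriction[q']$ is a homeomorphism of a compact set onto its image, so the generic real (hence the generic filter) is recoverable from $\dot g$. The single-split mechanism you describe is also correct and can even be sharpened: if no refinement separates the two sides of a split, then $f\circ e=f$ on the \emph{whole} condition (not just a subcondition), where $e$ is the exchange homeomorphism, by the usual continuity/clopen-neighbourhood argument; and since the exchanges generate a group whose orbits on $[q]$ are dense, invariance under all of them forces constancy.

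The genuine gap is exactly where you say you expect it to be, and flagging a step as ``the hardest point'' is not the same as proving it. For $\mathbb{V}$, the fusion's successor step needs one coordinate $m$ (the common $(n+1)$-st splitting coordinate, which Silver rigidity forces to be the same for all $2^n$ cells) at which \emph{every} cell separates. Your heuristic --- ``the way $f$ detects a coordinate is uniform across the whole splitting level'' --- is false in general: the cells are translates of one another, but $f$ need not respect translation, so the exchange at coordinate $m$ can fail to be $f$-invariant on one cell while being $f$-invariant on another. The naive repair (separate cell $\sigma_1$ at $m_1$, then cell $\sigma_2$ at some deeper $m_2$) does not work: $m_1$ remains a splitting coordinate of the refined condition, so $\sigma_2$'s two subcells at the $(n+1)$-st splitting level are still cut at $m_1$, where nothing has been separated, and your injectivity argument (``distinct branches split apart at some splitting level and hence have disjoint $f$-values'') breaks for branches first differing there. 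Enlarging the witnessing clopen set to keep both $m_1$ and $m_2$ free destroys the separation you had, since disjointness of $f[C]$ and $f[e_{m_1}C]$ says nothing about $f[e_{m_2}C]$ versus $f[e_{m_1}e_{m_2}C]$. A correct proof needs an actual simultaneous-separation argument (this is the real content of Grigorieff's Lemma 4.7), not an appeal to uniformity. A related but smaller issue occurs for $\mathbb{E}_0$: the cells at a splitting level are \emph{not} fully independent, because refining one cell can destroy the synchronized pairs required at ancestor splitting nodes, so the ``one at a time'' amalgamation must copy refinements to mirror cells --- routine, but it is the same kind of bookkeeping the paper's own Lemma 3.3 has to perform, and it deserves to be written out rather than waved at.
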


\begin{proof}
Lemma 4.7 and Proposition 4.8 of \cite{grigorieff1971combinatorics}
\end{proof}

Only the proof for $\mathbb{V}$ is found in \cite{grigorieff1971combinatorics}, as the forcing $\mathbb{E}_0$ was only introduced in \cite{zapletal2004descriptive}. However, the same proof can be easily adapted to work for $\mathbb{E}_0$ as well.

\begin{lem}\label{mainlemma2} Let $\dot{x}$ be a name for an element of $\omega^\omega$, witnessed by $p$, and $f:[p] \rightarrow \omega^\omega$ a continuous function in the ground model such that $p \Vdash f(\dot{x}_\mathrm{gen}) = \dot{x}$ and $f$ is either constant or injective. If $G$ is a closed graph on $\omega^\omega$, then there exists $q \leq p$ such that $f[q]$ is $G$-independent
\begin{itemize}
    \item[(a)] for the the $E_0$-forcing, if $G$ has no perfect cliques; and
    \item[(b)] for the Silver forcing, if $G$ has no $4$-cycles.
\end{itemize}
\end{lem}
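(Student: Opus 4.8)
The plan is to reduce the statement about $f$ on $[p]$ to the statement already proved for the generic real, by pushing the graph $G$ back along $f$. Concretely, if $f$ is constant, then $f[q]$ is a single point for any $q \leq p$, and a singleton is trivially $G$-independent since $G$ is irreflexive; so assume $f$ is injective. Since $f : [p] \to \omega^\omega$ is a continuous injection, its image $f[[p]]$ is a (not necessarily perfect) Polish space, and we may form the pull-back graph $H = (f^{-1})^*[G]$ on $[p]$, i.e. $(s,t) \in H \iff (f(s), f(t)) \in G$. Because $f$ is a continuous injection, $H$ is a closed graph on the Polish space $[p]$, and as noted in the remark preceding this lemma, $G$ has a perfect clique iff $H$ does, and $G$ has a $4$-cycle iff $H$ does. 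So under hypothesis (a), $H$ has no perfect clique, and under hypothesis (b), $H$ has no $4$-cycle.

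Next I would transfer $H$ from the abstract Polish space $[p]$ to the space $2^\omega$ so that Lemmas \ref{lemdense} and \ref{mainlemma1} apply. The natural move is to identify $[p]$ with $2^\omega$ via the canonical homeomorphism induced by the splitting-node bijection $\sigma \mapsto \sigma^*$: this homeomorphism $\pi : 2^\omega \to [p]$ is continuous (in fact a homeomorphism), and under it $p$ itself corresponds to the full tree $2^{<\omega}$, while conditions $q \leq p$ (which are $E_0$-trees, resp.\ Silver trees, refining $p$) correspond exactly to $E_0$-trees, resp.\ Silver trees, in $2^{<\omega}$ — this is precisely the sense in which $\mathbb{E}_0$ below $p$ (resp.\ $\mathbb{V}$ below $p$) is isomorphic to $\mathbb{E}_0$ (resp.\ $\mathbb{V}$). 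Pulling $H$ back along $\pi$ gives a closed graph $H' = \pi^*[H]$ on $2^\omega$ with no perfect clique (case a), resp.\ no $4$-cycle (case b). By Lemma \ref{mainlemma1}, $D(H')$ cannot be dense below the condition corresponding to $p$ (else $H'$ would have a perfect clique, resp.\ a $4$-cycle), so by Lemma \ref{lemdense} there is a condition $q' \leq 2^{<\omega}$, i.e.\ an $E_0$-tree (resp.\ Silver tree), with $[q']$ being $H'$-independent. Translating back through $\pi$ and then through $f$: let $q \leq p$ be the $E_0$-tree (resp.\ Silver tree) corresponding to $q'$; then $[q] = \pi[[q']]$ is $H$-independent, which by definition of $H$ means $f[[q]] = f[q]$ is $G$-independent, as required.

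There is one subtlety I should flag: $f$ is only assumed continuous on $[p]$, hence $f[[q]]$ is a compact subset of $\omega^\omega$, but $\omega^\omega$ is not compact, whereas Lemmas \ref{lemdense} and \ref{mainlemma1} were stated for graphs on $2^\omega$. This is handled by the earlier reduction in the paper (any perfect Polish space, in particular $[q]$ and the relevant subspace of $\omega^\omega$, is a continuous injective image of $\omega^\omega$, and closed graphs, perfect cliques and cycles all transfer); alternatively, one works throughout with the homeomorphic copy $H'$ of $H$ on $2^\omega$ as above and never leaves $2^\omega$ until the final line. Either way the combinatorial content is entirely in Lemmas \ref{lemdense} and \ref{mainlemma1}.

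The main obstacle I expect is bookkeeping rather than a genuine difficulty: one must be careful that the various pull-backs compose correctly — that $(f \circ \pi)^*[G]$ is genuinely closed (this uses that $f \circ \pi$ is a continuous \emph{injection} on a compact space, so it is a homeomorphism onto its image and the preimage of a closed set is closed), and that ``$q \leq p$ corresponding to $q'$'' really is an $E_0$-tree (resp.\ Silver tree) below $p$ and not merely a perfect tree. The latter is exactly the homogeneity built into the definitions of $\mathbb{E}_0$ and $\mathbb{V}$: every condition, restricted below any of its nodes, is again a condition of the same type, and the splitting structure of $p$ gives a canonical isomorphism of $\mathbb{E}_0 \restriction p$ (resp.\ $\mathbb{V} \restriction p$) with the whole forcing. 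Once this is set up, the proof is a direct application of Lemmas \ref{lemdense} and \ref{mainlemma1} to the transported graph.
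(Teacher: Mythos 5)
Your proposal is correct and follows essentially the same route as the paper: reduce to the injective case, pull $G$ back along $f$ to a closed graph, and apply Lemmas \ref{lemdense} and \ref{mainlemma1} to obtain $q \leq p$ with $[q]$ independent for the pulled-back graph, so that $f[q]$ is $G$-independent. The extra identification of $[p]$ with $2^\omega$ is unnecessary (the two lemmas already work below an arbitrary condition $p$, and the pull-back is a closed graph on $2^\omega$ because $[p]$ is closed), and note that the clique/cycle transfer is only an implication, not an ``iff'', since $f$ need not be surjective --- but only the implication you actually use holds, so this does no harm.
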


\begin{proof}

Assume, without loss of generality, that $f$ is injective and let $f^*[G]$ be the the pull-back of $G$ by $f$. In both cases, (a) and (b), we find $q \leq p$ such that $[q]$ is $f^*[G]$-independent. This implies that $f[q]$ is a $G$-independent compact set. \qedhere
\end{proof}

Note that the image of the function $f \restriction q$ is the set $[T_q (\dot{x})]$, where 
\[T_q (\dot{x}) = \{s \in \omega^{<\omega}\ |\ \exists r \leq q (q \Vdash s \subseteq \dot{x})\}.\]
This is called the \textit{tree of $q$-possitibilities for $\dot{x}$}. 

We will tackle the iteration by introducing a suitable notion of \textit{faithfulness}. 

Let $\alpha \geq 1$ be any ordinal and $p$ be an $\alpha$-iterated ($\mathbb{E}_0$ or $\mathbb{V}$) condition. For finite $F \subseteq \alpha$, $\eta: F \rightarrow \omega$ and $\sigma \in \prod_{\gamma \in F} 2^{\eta(\gamma)}$, let $p \ast \sigma$ be defined such that
\[\forall \gamma \in F \left((p \ast \sigma) \restriction \gamma \Vdash (p \ast \sigma) (\gamma) = p(\gamma) \ast \sigma(\gamma) \right).\]

Let $\dot{x}$ be a name for an element of $\omega^\omega$, which is not added at any proper stage of the iteration, witnessed by $p$. 

A condition $q \leq p$ is \textit{$G$-$(F, \eta)$-faithful} iff 
    \[([T_{q \ast \sigma} (\dot{x})] \times [T_{q \ast \tau} (\dot{x})]) \cap G = \emptyset,\]
    for all distinct $\sigma, \tau \in \prod_{\gamma \in F} 2^{\eta(\gamma)}$.

Let $\eta'(\gamma) = \eta(\gamma)$, for all $\gamma \neq \beta$, and $\eta' (\beta) = \eta(\beta) + 1$. 

\begin{lem}\label{bookkeeping} Let $G$ be a closed graph and $q \leq_{F, \eta} p$ be a $G$-$(F, \eta)$-faithful condition. Then there exists a $G$-$(F, \eta')$-faithful condition $r \leq_{F, \eta'} q$,
\begin{itemize}
    \item[(a)] for countable support iterations of the $\mathbb{E}_0$-forcing; if $G$ has no perfect cliques; and
    \item[(b)] for countable support iterations of $\mathbb{V}$, if $G$ has no $4$-cycles.
\end{itemize}
\end{lem}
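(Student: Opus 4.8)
The plan is to iterate the single-step arguments from Lemmas \ref{mainlemma1} and \ref{mainlemma2} along the support of $q$, doing a fusion-style construction that improves the condition at coordinate $\beta$ (the coordinate where $\eta$ is being increased to $\eta'$) while leaving all the already-established faithfulness untouched. First I would unwind the definitions: going from $\eta$ to $\eta'$ means that each old sequence $\sigma\in\prod_{\gamma\in F}2^{\eta(\gamma)}$ splits into two sequences $\sigma^\frown 0$ and $\sigma^\frown 1$ at coordinate $\beta$, and the new faithfulness requirement is an intersection condition for every pair of distinct sequences in $\prod_{\gamma\in F}2^{\eta'(\gamma)}$. The pairs that were already distinct at level $\eta$ remain handled by the hypothesis that $q$ is $G$-$(F,\eta)$-faithful (restricting a condition only shrinks the trees $T_{q\ast\sigma}(\dot x)$, hence shrinks the products, so an empty intersection stays empty); the genuinely new pairs are exactly those of the form $(\sigma^\frown 0,\sigma^\frown 1)$ together with the mixed pairs $(\sigma^\frown i,\tau^\frown j)$ for $\sigma\neq\tau$, and again the mixed pairs are already covered by the old faithfulness. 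So the real work is: for each of the finitely many $\sigma\in\prod_{\gamma\in F}2^{\eta(\gamma)}$, arrange $([T_{r\ast\sigma^\frown 0}(\dot x)]\times[T_{r\ast\sigma^\frown 1}(\dot x)])\cap G=\emptyset$.

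Next I would handle these finitely many $\sigma$'s one at a time by successive amalgamation at coordinate $\beta$, exactly as in the proof of Lemma \ref{lemdense}. Fix $\sigma$ and let $q'$ be the current condition. Working below $q'\ast\sigma$, consider its behaviour at coordinate $\beta$: below $(q'\ast\sigma)\restriction\beta$ the $\beta$-th coordinate is (a name for) an $\mathbb{E}_0$-tree (resp.\ Silver tree), and the two immediate splitting restrictions at the top correspond to $\sigma^\frown 0$ and $\sigma^\frown 1$. I want to find a stronger condition (strengthening only at coordinate $\beta$, so as not to disturb the already-fixed levels $\eta$ on $F$) so that the two halves of $\dot x$'s possibility-trees become $G$-apart. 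If the relevant density set $D(G)$ (in the iterated, relativized sense) is \emph{not} dense below the current condition restricted to $\sigma$, then the argument of Lemma \ref{lemdense} produces such a strengthening directly. If $D(G)$ \emph{is} dense there, then the arguments of Lemma \ref{mainlemma1} apply: for $\mathbb{E}_0$ one builds (inside the iteration, below $(q'\ast\sigma)\restriction\beta$) a name for a perfect subtree whose two halves satisfy $[\cdot\ast 0]\times[\cdot\ast 1]\subseteq G$ with matching $E_0$-classes, and by closedness of $G$ this yields a perfect clique of $G$ in a generic extension, hence by absoluteness a perfect clique in $V$ — contradicting (a)'s hypothesis; for $\mathbb{V}$ one runs the Ramsey argument of Lemma \ref{mainlemma1}(b) to extract a $4$-cycle, contradicting (b)'s hypothesis. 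Either way we get the desired strengthening at coordinate $\beta$ for this $\sigma$. Since there are only finitely many $\sigma$, and each step only strengthens at the single coordinate $\beta$ (hence is $\leq_{F,\eta'}$ the previous one once we also keep $L_{\eta(\gamma)}$ fixed for $\gamma\neq\beta$ and $L_{\eta'(\beta)}=L_{\eta(\beta)+1}$ — which the Lemma \ref{lemdense}-style construction does automatically, as it only moves stems above the new splitting level), finitely many amalgamations produce $r\leq_{F,\eta'}q$ that is $G$-$(F,\eta')$-faithful.

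The main obstacle, which is why this is phrased as a single lemma rather than a trivial corollary, is making the Lemma \ref{mainlemma1} dichotomy work \emph{inside the iteration} rather than for a single generic real. Concretely: the trees $T_{q\ast\sigma}(\dot x)$ are trees of possibilities for a name $\dot x$ that lives in the full iterated extension, not an honest continuous image of a single forcing's branch space, so I have to verify that "$D(G)$ dense below the condition" still lets me build the perfect clique / find the $4$-cycle witness. The key technical point is that $\dot x$ is assumed not to be added at any proper initial stage of the iteration, which by the minimality/Grigorieff-type analysis (the Fact quoted before Lemma \ref{mainlemma2}, applied to the tail forcing below coordinate $\beta$) lets me reduce the behaviour of $\dot x$ at coordinate $\beta$ to a continuous constant-or-injective function of the $\beta$-th coordinate generic, so that Lemma \ref{mainlemma2} applies coordinatewise. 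I would also need to be a little careful that the finitely many amalgamations genuinely commute — strengthening at $\beta$ below $q'\ast\sigma$ does not damage the faithfulness already achieved for a previously-treated $\sigma''$, which holds because distinct $\sigma$'s pass through distinct nodes at the levels in $F$, so their restrictions $q'\ast\sigma$ and $q'\ast\sigma''$ are strengthened independently and the products $[T_{\cdot\ast\sigma}]\times[T_{\cdot\ast\sigma}]$ for different $\sigma$ only ever shrink.
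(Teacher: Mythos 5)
Your high-level reduction is correct and matches the paper's: after incrementing $\eta$ at $\beta$, the only genuinely new pairs to separate are the sibling pairs $(\sigma^\frown 0,\sigma^\frown 1)$, the old pairs surviving because possibility trees only shrink under strengthening, and the finitely many new pairs are handled by successive amalgamation. The gap is in your key technical step, where you propose to apply the Grigorieff-type minimality fact ``to the tail forcing below coordinate $\beta$'' so as to view $\dot x$ as a continuous constant-or-injective function of the $\beta$-th coordinate generic and then invoke Lemma \ref{mainlemma2} coordinatewise. That fact is about a single copy of $\mathbb{E}_0$ or $\mathbb{V}$, not about a tail of a countable support iteration, and $\dot x$ is in general not a function of the $\beta$-th generic at all: it depends on the whole support of the condition. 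For the same reason, separating $[T_{r\ast\sigma^\frown 0}(\dot x)]$ from $[T_{r\ast\sigma^\frown 1}(\dot x)]$ by strengthening only at coordinate $\beta$ cannot work in general; one must strengthen a whole tail of the condition, which your bookkeeping of the orders $\leq_{F,\eta'}$ does not account for.

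The paper instead splits on the length $\alpha$ of the iteration, i.e.\ on where the real $\dot x$ is added. If $\alpha$ is a limit, then since $\dot x$ is not added at any proper initial stage, the possibility trees of the tail conditions above the two new restrictions are perfect, and a Lemma \ref{lemdense}-style argument produces names for tail conditions $q_0^\sigma, q_1^\sigma$ whose possibility trees are $G$-separated; only the no-perfect-cliques hypothesis is used here, for both forcings. If $\alpha=\xi+1$, then $\dot x$ is a new real added by the single final forcing over the intermediate extension, and it is exactly there --- at the last coordinate $\xi$, not at $\beta$ --- that Lemma \ref{mainlemma2} and the minimality Fact are legitimately applied, to make $[T_{q(\xi)}(\dot x)]$ itself $G$-independent before arranging the new splitting by deciding initial segments below $\xi$. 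This successor step is where the difference between no $4$-cycles and no perfect cliques actually bites, as the paper remarks after the proof. Your proposal as written recovers neither of these two mechanisms, so the central step of the argument is missing.
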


\begin{proof} The proof works exactly the same both for $\mathbb{E}_0$ and $\mathbb{V}$, using either item (1) or (2) of Lemma \ref{mainlemma2}, at successor steps. Let $\{\sigma_0, ..., \sigma_{m-1}\}$ be an enumeration of $\prod_{\gamma \in F} 2^{\eta(\gamma)}$.

    \begin{itemize}
        \item Case 1: $\alpha$ is limit. 
        
        We define a $\leq_{F, \eta}$-decreasing sequence $(q_j)_{j < m}$, along with names for conditions $q_0^{\sigma}$ and $q_1^{\sigma}$, where $\sigma \in \prod_{\gamma \in F} 2^{\eta(\gamma)}$ as follows:

        Suppose we have defined $q_{j-1}$, for $j < m$. Since $\dot{x}$ is not added in a proper initial stage of the iteration,

        \[(q_{j-1} \ast \sigma_j) \restriction \delta \Vdash T_{q(\delta) \ast \sigma_j(\delta)^\frown q \restriction (\delta, \alpha)}(\dot{x}) \text{ is a perfect tree}.\]

        Hence, there are names for conditions $q_0^{\sigma_j}$ and $q_1^{\sigma_j}$ such that 

        \[(q_{j-1} \ast \sigma_j) \restriction \delta \Vdash q_i^{\sigma_j} \leq (q(\delta) \ast \sigma_j (\delta)^\frown i)^\frown q \restriction (\delta, \alpha)\]
    
        and

        \[(q_{j-1} \ast \sigma_j) \restriction \delta \Vdash \left(\left[T_{q_{0}^{\sigma_j}} (\dot{x})\right] \times \left[T_{q_1^{\sigma_j}} (\dot{x})\right]\right) \cap G = \emptyset.\]
        
        Let $q_j \leq_{F, \eta} q_{j-1}$ be a condition such that $(q_j \ast \sigma_j) \restriction \delta$ decides all maximal initial segments of $\dot{x}$ that are decided by each $q_{i}^{\sigma_j}$. Finally, let $r \leq_{F, \eta} q_{m-1}$ be such that $r \restriction \delta = q_{m-1} \restriction \delta$ and, for all $\sigma \in \prod_{\gamma \in F} 2^{\eta(\gamma)}$ and all coordinatewise extensions $\sigma' \in \prod_{\gamma \in F} 2^{\eta'(\gamma)}$ of $\sigma$,

        \[(r \ast \sigma') \restriction \delta \Vdash (r \ast \sigma') \restriction [\delta, \alpha) = q_{\sigma'(\eta(\beta))}^\sigma.\]

        Then $r$ is our desired condition. 
        
        \item Case 2: $\alpha = \xi + 1$.
        
        Assume, without loss of generality, $\xi \in F$. Moreover, using Lemma \ref{mainlemma2}, assume
    
        \[q \restriction \xi \Vdash [T_{q(\xi)} (\dot{x})]\text{ is $G$-independent}.\]
        
        We may define a $\leq_{F, \eta}$-decreasing sequence $(q_j)_{j \leq m}$ in $\mathbb{V}_{\alpha}$ as follows: 

        Suppose we have defined $q_{j-1}$, for $j < m$ and assume, by shrinking $q_{j-1}$ if necessary, that $(q_{j-1} \ast \sigma_j) \restriction \xi$ decides all maximal initial segments of $\dot{x}$, that are decided by $q_{j-1} (\xi) \ast \tau$, for each $\tau \in 2^{\eta(\xi)}$.
    
        Let $q_j \leq_{F, \eta} q_{j-1}$ be such that
    
        \[(q_j \ast \sigma_j^\frown i) \restriction \xi \Vdash q_j(\xi) = \bigcup_{\tau \in 2^{\eta(\xi)}} q_{j-1} (\xi) \ast \tau^\smallfrown i.\]
    
        Then $r = q_{m-1}$ ir the desired condition. \qedhere

        \end{itemize}

\end{proof}

Note that in Case 1 we simply used that the graph does not have perfect cliques. In fact reals added at limit stages of iterations of both $\mathbb{E}_0$ and $\mathbb{V}$ are contained in compact $G$-independent sets, coded in the ground model, as long as $G$ is a closed graph without perfect cliques. It is at successor stages where the difference occurs.

\begin{proof}[Proof of Theorem \ref{silverthm}]
    
    Using Lemma \ref{bookkeeping} and some bookkeeping, we may construct a fusion sequence $(p_n)_{n \in \omega}$, witnessed by $(F_n)_{n \in \omega}$ and $(\eta_n)_{n \in \omega}$, such that each $p_n$ is $G$-$(F_n, \eta_n)$-faithful. This way, if $q$ is the fusion of $(p_n)_{n \in \omega}$, then $[T_q (\dot{x})]$ is a $G$-independent set. \qedhere
    
\end{proof}

\section{Forcing with fat Silver trees}

For an analytic graph $G$ on a Polish space $X$, let $I_G$ be the $\sigma$-ideal of analytic sets with countable Borel chromatic number --- i.e., for every $A \in I_G$, $\chi_B (G \cap A^2) \leq \aleph_0$. It follows from the next fact that this ideal has the \textit{inner approximation property} (see 3.2 of \cite{kechris1987structure}) --- i.e., any $I_G$-positive analytic set contains a compact $I_G$-positive set.

\begin{fact}[Theorem 6.3 of \cite{kechris1999borel}]\label{g0dich} Let $X$ be a Polish space and $G$ be an analytic graph on $X$, then exactly one of the following holds:

\begin{itemize}
    \item[(1)] either $\chi_B (G) \leq \aleph_0$; or
    \item[(2)] there is a continuous homomorphism from $G_0$ to $G$.
\end{itemize}

\end{fact}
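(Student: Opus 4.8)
The plan is to show the two alternatives are mutually exclusive and that at least one always holds. \emph{Exclusivity} is a Baire-category argument: if $f\colon 2^\omega\to X$ were a continuous homomorphism from $G_0$ to $G$ while $X=\bigcup_n B_n$ with each $B_n$ Borel and $G$-independent, then each $C_n:=f^{-1}(B_n)$ would be Borel and $G_0$-independent (a $G_0$-edge inside $C_n$ maps to a $G$-edge inside some $B_n^2$), and $\bigcup_n C_n=2^\omega$; so some $C_n$ is comeager on a basic clopen $N_t$, and taking $k$ with $s_k\supseteq t$, both $\{w:s_k{}^\frown 0{}^\frown w\in C_n\}$ and $\{w:s_k{}^\frown 1{}^\frown w\in C_n\}$ are comeager (each $w\mapsto s_k{}^\frown i{}^\frown w$ being a homeomorphism onto a clopen subset of $N_t$), so a common $w$ yields a $G_0$-edge inside $C_n$ --- a contradiction. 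This is the fact, recalled in the introduction, that every Baire-measurable $G_0$-independent set is meager.

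For the substantive direction, assume $X$ is not a countable union of Borel $G$-independent sets. Refine the topology of $X$ to a finer zero-dimensional Polish one with the same Borel sets (this changes neither alternative), fix a closed $F\subseteq X\times X\times\omega^\omega$ with $G=p[F]$, and let $I$ be the $\sigma$-ideal of analytic $A\subseteq X$ with $\chi_B(G\restriction A^2)\le\aleph_0$, so $X\notin I$. Since $X$ is second countable and $I$ is $\sigma$-additive, the union $X^{*}$ of all basic open $U$ with $U\in I$ lies in $I$; its complement is a nonempty closed --- hence Polish, zero-dimensional --- subspace $Z$ which is \emph{$I$-perfect}: every nonempty relatively open subset of $Z$ is $\notin I$ (else a small basic open $V$ would split as $(V\cap X^{*})\cup(V\cap Z)\in I$, forcing $V\subseteq X^{*}$). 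Replacing $X$ by $Z$, we may assume $X$ itself is $I$-perfect, so every nonempty clopen piece of $X$ is $\notin I$, hence not $G$-independent.

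Now build a continuous homomorphism $\varphi\colon 2^\omega\to X$ by recursion on levels: clopen sets $W_s\subseteq X$ ($s\in 2^{<\omega}$) meeting $X$, with nested closures and $\mathrm{diam}(W_s)\to 0$ along branches, plus for each $k$ a clopen scheme $(V^k_t)_{t}$ in $\omega^\omega$ with vanishing diameters, arranged so that $(W_{s_k{}^\frown 0{}^\frown t}\times W_{s_k{}^\frown 1{}^\frown t}\times V^k_t)\cap F\neq\emptyset$ for all $k,t$. To keep the recursion alive one carries a stronger largeness invariant: the analytic sets obtained by projecting to the first coordinate the slices of $F$ pinned down at each scheme node --- one slice per $G_0$-edge through it --- must remain $\notin I$. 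The split at a node $s_k$ then rests on the lemma that any analytic $Y\notin I$ contains a $G$-edge $(x,y)$ with an $F$-witness $z$ whose endpoints $x,y$ still have arbitrarily small analytic $\notin I$ neighborhoods in $Y$ (true because an $I$-positive set is never $G$-independent and $X$ is $I$-perfect). The recursion is run in the Gandy--Harrington topology on $X$ (equivalently, over a tree coding the $\mathbf{\Sigma^1_1}$ sets involved), which is strong Choquet, so each branch of the scheme converges to a genuine point; $\varphi(x)$ is the unique point of $\bigcap_n W_{x\restriction n}$. For a $G_0$-edge $(s_k{}^\frown 0{}^\frown w,\, s_k{}^\frown 1{}^\frown w)$ the nested $F$-slices with vanishing diameters converge to a point of $\overline F=F$ over $(\varphi(s_k{}^\frown 0{}^\frown w),\varphi(s_k{}^\frown 1{}^\frown w))$, so this pair lies in $p[F]=G$, and $\varphi$ is the desired homomorphism.

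The main obstacle is not connecting the immediate successors of one split --- that is immediate --- but maintaining a single largeness invariant that survives \emph{every} split and is coherent across \emph{all} $G_0$-edges at once, so that all matching descendants of $s_k{}^\frown 0$ and $s_k{}^\frown 1$ end up $G$-related and $\varphi$ is genuinely a $G_0$-homomorphism rather than merely ``$G$-connected at the first split.'' This is what forces one to carry analytic sets together with continuous choices of $F$-witnesses and to track $I$-positivity through iterated projections; the witness space $\omega^\omega$ and the Gandy--Harrington machinery are needed precisely because $G$ is only analytic, so ``$(W_n\times W'_n)\cap G\neq\emptyset$ with $\mathrm{diam}\to0$'' does not by itself force $(\lim,\lim)\in G$. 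In modern treatments the bookkeeping is streamlined by running everything in the Gandy--Harrington topology from the start and replacing the explicit $I$-perfect kernel with an effective ``$X$ everywhere $\notin I$'' reduction.
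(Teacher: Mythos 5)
The paper does not prove this statement at all: it is imported verbatim as Theorem~6.3 of \cite{kechris1999borel}, so there is no in-paper argument to compare yours against. Judged on its own terms, your proposal is a faithful reconstruction of the Kechris--Solecki--Todorcevic proof. The exclusivity half is complete and correct, and is exactly the observation the paper itself records as inequality~(\ref{covMG0}): Baire-measurable $G_0$-independent sets are meager, so a continuous homomorphism pulls a countable Borel colouring of $G$ back to a countable Borel colouring of $G_0$, which cannot exist.

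The existence half, however, is still a sketch at precisely the decisive step, and you should be explicit that it is one. Two points need more than assertion. First, your ``key lemma'' is fine for producing the \emph{new} split at $s_n$ (apply the condensation argument to the current analytic $I$-positive set $Y$, not to $X$, and use that an $I$-positive analytic set cannot be $G$-independent), but the invariant that actually survives the recursion in the standard proof is not ``$\notin I$''; it is ``nonempty $\Sigma^1_1$ subset of $X\setminus\bigcup\{\Delta^1_1\ G\text{-independent sets}\}$,'' and the fact that every such set contains a $G$-edge rests on the First Reflection Theorem (every $\Sigma^1_1$ $G$-independent set extends to a $\Delta^1_1$ one). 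Your classical $I$-perfect kernel only guarantees largeness of relatively \emph{open} pieces, whereas the sets $W_s$ arising mid-recursion are cut down by projections of slices of $F$ and are no longer open; nothing in your write-up shows they stay $I$-positive. Second, at each level every node $s\in 2^{n+1}$ participates in one edge-constraint for each $k\le n$ with $s\restriction k=s_k$, so shrinking $W_s$ to serve one constraint can a priori empty the witness set of another; the round-robin argument that nonemptiness of \emph{all} triple-sets is preserved simultaneously (via the strong Choquet property of the Gandy--Harrington topology) is the actual content of the proof, and you name the machinery without running it. None of this is wrong --- it is the standard argument --- but as written the second half should be regarded as an accurate outline rather than a proof.
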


To see why this implies that $I_G$ has the inner approximation property, for an analytic set $A$, apply the fact above to the graph $G \cap A^2$.  If $\chi_B (G \cap A^2) > \aleph_0$ and $\varphi$ is a continuous homomorphism from $G_0$ to $G$, then $\varphi[2^\omega] \subseteq A$ is the desired compact $I_G$-positive set.  

Surprisingly, the most natural forcing notion to increase $\chi_B (G_0)$ --- the forcing notion of Borel $I_{G_0}$-positive sets --- is \textit{not} proper (see Theorem 4.7.20 of \cite{zapletal2008idealized}). We solve this problem by introducing a \textit{corrected} forcing notion of perfect trees which will also increase $\chi_B (G_0)$ but is, however, proper.

Let $I_{G_0}^t$ be the $\sigma$-ideal of translations of sets in $I_{G_0}$ --- i.e., 
\[A \in I_{G_0}^t \leftrightarrow \exists n \in \omega\ (A + 1_n \in I_{G_0}),\]
where $1_n \in 2^\omega$ is the image of the caracteristic function of ${n}$, and $+$ denotes the coordinatewise sum mod $2$, defined on $2^\omega$.

Clearly, $I_{G_0} \subseteq I_{G_0}^t$ and, therefore, $\cov(I_{G_0}^t) \leq \chi_B (G_0)$.

We draw inspiration from the fact that the Silver forcing is equivalent to the forcing notion of Borel $I_{G_1}$-positive sets (see Fact \ref{zapsilver}) and isolate a forcing notion of Silver trees which is equivalent to the forcing notion of Borel $I_{G_0}^t$-positive sets.

Say that a Silver tree on $2^{<\omega}$ is a \textit{$G_0$-tree} iff

\[\forall s \in \mathrm{spl}(p)\ \forall t \in 2^{|s|}\ \left(([p_s] + t) \times ([p_s] + t) \cap G_0 \neq \emptyset\right).\]

This condition will be referred to as \textit{$G_0$-fatness}. Note that this ``fatness'' property depends on the group action that we are considering on the underlying space --- in our case, the rational shifts on the Cantor space. Let $\mathbb{G}_0$ denote the forcing notion of $G_0$-trees, ordered by inclusion.

The next claim and lemma are slight modifications of Claim 2.3.31 and Lemma 2.3.29 of \cite{zapletal2004descriptive}, respectively.

\begin{claim}\label{fatclaim} Let $A$ be an $I_{G_0}^t$-positive analytic subset of $2^\omega$ and $s \supseteq \mathrm{st}(A)$. Then for every $t \in 2^{|s|}$, there exists some $n_t \in \omega$ and an $I_{G_0}^t$-positive analytic set $A_t \subseteq A$ such that

\begin{itemize}
    \item[(1)] $A_t + 1_{n_t} \subseteq A$; and
    \item[(2)] $s_{n_t} \subseteq \mathrm{st}(A_t + t)$.
\end{itemize}
\end{claim}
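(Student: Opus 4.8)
Fix $t \in 2^{|s|}$; for $n \ge |s|$ write $\bar t \in 2^{n}$ for the string obtained from $t$ by appending zeros. The plan is to produce a level $n_t$ and then simply put
\[
A_t := A \cap (A + 1_{n_t}) \cap \bigl[\,s_{n_t} + \bar t\,\bigr].
\]
Granting this, conditions (1) and (2) are immediate: if $y \in A_t$ then $y \in A + 1_{n_t}$, so $y + 1_{n_t} \in A$; and $y \restriction n_t = s_{n_t} + \bar t$ forces $(y+t)\restriction n_t = s_{n_t}$, i.e.\ $A_t + t \subseteq [s_{n_t}]$. Since $A_t \subseteq A$ is plainly analytic, the entire content of the claim is to choose $n_t$ so that $A_t$ is $I_{G_0}^t$-positive.

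The one structural fact about $G_0$ I would invoke is that $(y,y') \in G_0$ holds exactly when $y' = y + 1_k$ for some $k$ with $y \restriction k = s_k$; so the $G_0$-neighbours of $y$ are precisely the points $y+1_k$ with $y \restriction k = s_k$, and any such neighbour lies in $[s_k]$. Put $C_n := A \cap (A + 1_n) \cap [\,s_n + \bar t\,]$ for every $n$ (when $A \subseteq [s]$, which is the situation the claim is applied in, $C_n \neq \emptyset$ forces $n \ge |s|$ and $s_n \supseteq s+t$; the general case is handled the same way, letting $n$ range over all of $\omega$ with the obvious reading of $\bar t$). The crucial observation is that the leftover set $A_{\mathrm{bad}} := A \setminus \bigcup_n C_n$ becomes $G_0$-independent after a shift by $t$: if $z,z' \in A_{\mathrm{bad}} + t$ were joined by a $G_0$-edge, then $z' = z + 1_n$ with $z \restriction n = s_n$, which, undoing the shift, says $z + t \in C_n$, i.e.\ $z \in C_n + t$, contradicting $z \in A_{\mathrm{bad}} + t$.

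The level $n_t$ is then found by contradiction. Suppose $C_n \in I_{G_0}^t$ for every $n$. Since $I_{G_0}^t$ is a $\sigma$-ideal, $\bigcup_n C_n \in I_{G_0}^t$; as $A = A_{\mathrm{bad}} \cup \bigcup_n C_n$ is $I_{G_0}^t$-positive, we get $A_{\mathrm{bad}} \notin I_{G_0}^t$. But $A_{\mathrm{bad}} + t$ is $G_0$-independent, hence a member of $I_{G_0}$ (a Borel $G_0$-independent set is in $I_{G_0}$ by definition; one may also cite Fact~\ref{g0dich}, since there is no continuous homomorphism from $G_0$ into an edgeless graph); as $I_{G_0}^t$ contains every rational translate of an $I_{G_0}$-set, this yields $A_{\mathrm{bad}} \in I_{G_0}^t$ --- a contradiction. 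Hence some $C_n$ is $I_{G_0}^t$-positive, and we take $n_t := n$, $A_t := C_n$.

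\textbf{Main obstacle.} The genuinely new point, compared with the $G_1$/Silver case, is recognizing that the correct fattening set at $s,t$ is $A \cap (A+1_n) \cap [\,s_n + \bar t\,]$ and that deleting all of these from $A$ and translating by $t$ produces a $G_0$-independent set; this is exactly where the concrete, non-translation-invariant shape of $G_0$ and the density of $(s_k)$ (which makes levels $n$ with $s_n \supseteq s+t$ available, so that a positive $A$ is forced to meet them) are used. What remains is bookkeeping: juggling the shifts $+1_n$, $+t$, $+\bar t$ simultaneously, and coping with the fact that $A_{\mathrm{bad}}$ is only a difference of analytic sets --- so one either notes that it is $I_{G_0}^t$-measurable (being in the $\sigma$-algebra generated by the analytic sets) and therefore, being $G_0$-independent after the shift, a member of $I_{G_0}^t$, or else reduces at the outset, via the inner approximation property, to the case where $A$ is compact, which makes $A_{\mathrm{bad}}$ Borel. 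An alternative route to $n_t$ is to apply the $G_0$-dichotomy to the $I_{G_0}$-positive set $A+t$, obtain a continuous $G_0$-homomorphism into it, split $2^\omega$ according to the coordinate $1_n$ used along the images of the edges $\bigl(s_k^\frown 0^\frown x,\ s_k^\frown 1^\frown x\bigr)$, and read $n_t$ off a non-meager piece.
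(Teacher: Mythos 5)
Your proposal is correct and matches the paper's proof: your sets $C_n = A \cap (A+1_n) \cap [s_n + \bar t\,]$ are exactly the paper's $A_{t,n} = \{x \in A : s_n \subseteq x+t \text{ and } x+1_n \in A\}$, and your key observation that $(A \setminus \bigcup_n C_n) + t$ is $G_0$-independent is precisely the reason the paper declares the leftover set ``clearly'' $I_{G_0}^t$-small before extracting a positive piece via $\sigma$-additivity. If anything you are more careful than the paper, which takes the union over all $t$ and $n$ at once (obscuring that the leftover argument must be run separately for each fixed $t$) and glosses over the fact that the leftover is only a difference of analytic sets.
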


\begin{proof} 
    For every $t \in 2^{|s|}$, and every $n \in \omega$, let
    \[A_{t,n} \doteq \{x \in A\ |\ s_{n} \subseteq x + t \text{ and } x + 1_n \in A\}.\]
    
    Since $A \setminus \bigcup_{t \in 2^{|s|}, n \in \omega} A_{t, n}$ is clearly an $I_{G_0}^t$-small analytic set, $A$ is $I_{G_0}^t$-positive and $I^t$ is a $\sigma$-ideal, there exists $n_t$ such that $A_t = A_{t, n_t}$ is $I_{G_0}^t$-positive. It is easy to see that one such set is as required. \qedhere
\end{proof}


\begin{lem} Let $A$ be an analytic subset of $2^\omega$. Then either $A \in I_{G_0}^t$, or it contains the branches of some $G_0$-tree.

\end{lem}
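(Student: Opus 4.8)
The plan is to mimic the classical Zapletal-style dichotomy proof: fix an analytic $I_{G_0}^t$-positive set $A$ and try to build a $G_0$-tree inside it by a fusion construction, using Claim \ref{fatclaim} at every splitting node to secure the fatness requirement. First I would set up the recursion: I will construct a decreasing (in the tree-approximation sense) sequence of finite approximations to a Silver tree, together with a shrinking sequence of $I_{G_0}^t$-positive analytic sets attached to each node, so that the branch sets through the approximations lie inside $A$. The Silver structure (the ``homogeneity'' requirement that the splitting above a node is the same for both immediate successors) is handled the way it is for $\mathbb{V}$: one uses that $I_{G_1}$-positivity — hence here $I_{G_0}^t$-positivity, which is finer — survives the operation of passing to a common ``shape'' of two subsets obtained from the two immediate successors of a splitting node. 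So the induction hypothesis at stage $n$ should carry: a finite Silver-type stem structure with $n$ splitting levels, and for each node $\sigma$ at the current frontier an $I_{G_0}^t$-positive analytic $A_\sigma \subseteq A$ whose stem extends $\sigma$.

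The key new ingredient over the plain Silver dichotomy is arranging $G_0$-fatness at each splitting node $s$, i.e. that for every $t \in 2^{|s|}$ the set $([p_s]+t)$ contains a $G_0$-edge inside itself. This is exactly what Claim \ref{fatclaim} buys: given the current positive set $A$ sitting above a node $s$ that we have decided will be a splitting node, I apply the claim (iterating over the finitely many $t \in 2^{|s|}$) to extract a positive subset $A' \subseteq A$ and, for each $t$, a shift $1_{n_t}$ witnessing that $A'$ and $A'+1_{n_t}$ both lie in $A$ with $s_{n_t} \subseteq \mathrm{st}(A'+t)$ — and since $s_{n_t}$ is a splitting node of the $G_0$-graph generating sequence, the pair $(s_{n_t}{}^\frown 0{}^\frown x, s_{n_t}{}^\frown 1{}^\frown x)$ is a $G_0$-edge, so once we force the eventual tree's branches through $A'+t$ to contain both of these for some $x$, fatness at $s$ for that $t$ is guaranteed. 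One runs this finitely many times (once per $t \in 2^{|s|}$, each time shrinking), so after finitely many shrinkings the resulting positive set handles all $t$ simultaneously. Then one continues splitting inside that set.

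Then the fusion: since at stage $n$ we also need to make genuine progress, I interleave the fatness step with actual splitting, i.e. at stage $n$ I find inside each frontier set two incompatible extensions that are the same across all frontier nodes (the Silver coherence), using $I_{G_0}^t$-positivity to guarantee that the relevant refinements stay positive — this is the standard fact that an $I_{G_0}^t$-positive analytic set, having nonempty interior-free-but-still-positive pieces, splits; concretely, since $I_{G_0}^t$ contains all countable sets and is a proper $\sigma$-ideal, a positive analytic set cannot be a single branch, so some node has two positive immediate-successor pieces, and by coherence one picks a common coordinate to split on across all nodes. Taking the intersection/limit of the approximations yields a tree $p$; by construction it is perfect, Silver (coherent splitting), $G_0$-fat, and $[p] \subseteq A$, i.e. a $G_0$-tree inside $A$. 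Hence if $A$ is $I_{G_0}^t$-positive it contains the branches of a $G_0$-tree, and the dichotomy — either $A \in I_{G_0}^t$ or $A$ contains a $G_0$-tree — follows (the two alternatives are exclusive since $[p]$ is easily $I_{G_0}^t$-positive for a $G_0$-tree $p$, as $G_0$-fatness prevents $[p]$ and its shifts from being $G_0$-independent up to translation).

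I expect the main obstacle to be reconciling the two structural demands at a splitting node simultaneously: the Silver \emph{coherence} of splitting (same branching pattern above both children of a split) and the \emph{fatness} witnessed by translations $1_{n_t}$ which a priori depend on the node and might clash with the coherence when we later shift and restrict. The delicate bookkeeping is to show that after applying Claim \ref{fatclaim} for all $t \in 2^{|s|}$ one still has a positive analytic set in which one can perform a coherent Silver split — i.e. that the finitely many shrinkings do not destroy the ability to branch in a uniform way — and then to verify that the limit tree really satisfies $G_0$-fatness at \emph{every} splitting node, not just the one where the claim was applied, which requires that the fatness-witnessing pairs actually survive into $[p]$ rather than being cut off by later refinements. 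Making the induction hypothesis strong enough to propagate this — presumably by committing, at the moment we apply Claim \ref{fatclaim} to a node $s$, to a fixed finite extension of the stem below which the fat pair already appears, so it can never be lost — is the crux.
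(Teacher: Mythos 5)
Your overall strategy --- a fusion of finite uniform approximations in which Claim \ref{fatclaim} supplies, at each prospective splitting node and for each translate $t$, a positive subset $A'$ together with a shift $1_{n_t}$ so that taking $A'$ and $A'+1_{n_t}$ as the two children simultaneously produces the Silver split and the fat pair --- is the same as the paper's. But there are two genuine gaps. First, you never explain how the branches of the limit tree end up inside $A$. The sets you carry are analytic, not closed, so a branch of the fusion tree is only a limit of stems of a decreasing sequence of positive analytic sets, and that limit need not belong to $A$. The paper handles this by fixing a tree $T$ on $(2\times\omega)^{<\omega}$ projecting to $A$ and carrying, alongside each endnode $t_i^n$ of the approximation, a node $u_i^n$ with $(t_i^n,u_i^n)\in T$, working with the positive sets $\mathrm{proj}[T\restriction(t_i^n,u_i^n)]$; a branch of the limit tree then comes paired with a branch of the second-coordinate tree, which witnesses membership in $A$. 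Without some such device your construction only yields $[p]\subseteq\overline{A}$.

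Second, the Silver coherence. You propose to carry a separate positive set $A_\sigma$ for each frontier node and then ``pass to a common shape,'' and you yourself flag the survival of positivity under this operation as the unresolved crux. The paper's resolution is to never have separate sets to reconcile: the induction carries a single positive set $A^n=\bigcap_{i<k_n}\mathrm{proj}[T\restriction(t_i^n,u_i^n)]-t_i^n+t_0^n$, that is, all pieces translated back to a common root and intersected, and at each step the new pair of children above \emph{every} endnode is the same set $A_j$ and its translate $A_j+1_{n_j}$, carved out of $A^n$ by Claim \ref{fatclaim}. Positivity is maintained because one only ever intersects finitely many rational translates of a set already known to be positive, and $I_{G_0}^t$ is by construction invariant under the shifts $+1_n$ --- this translation-invariance is precisely why the ideal $I_{G_0}^t$ rather than $I_{G_0}$ appears in the statement. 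Identifying this mechanism, rather than appealing to the Silver dichotomy as a black box, is what is missing; as written, the ``coherent split across all frontier nodes'' step does not go through.
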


\begin{proof}
    Let $p, p'$ be finite uniform trees --- i.e., for $s, t \in p$ of same length, \[s^\smallfrown i \in p \leftrightarrow t^\smallfrown i \in p,\]
    for each $i < 2$). Say that $p'$ is a \textit{fat-extension} of $p$ iff for every $t \in 2^{<\omega}$ such that $|t| = \mathrm{ht}(p)$, there exists $t' \in 2^{<\omega}$ such that $s_{n_t} + t' \in \mathrm{spl}(p')$,
    where $n_t$ is the least $n$ such that $s_{n_t} \supseteq t$.
    
    Let $T$ be a tree on $(2 \times \omega)^{<\omega}$ such that $A$ is the projection of $T$ onto the first coordinate. Inductively we construct sequences $(p^n)_{n \in \omega}$, of finite binary trees, and $(q^n)_{n \in \omega}$, of finite subtrees of $\omega^{<\omega}$, such that
    
    \begin{itemize}
        \item the endnodes of $p^n$ are $t_0^n, ..., t_{k_n-1}^n$, and of $q^n$ are $u_0^n, ..., u_{k_n-1}^n$;
        
        \item $(t_i^n, u_i^n) \in S$, for $i < k_n$;
    
        \item $p^n$ is a finite uniform tree fat-extending $p^{n-1}$; and
        
        \item $A^n \doteq \bigcap_{i < k_n} \mathrm{proj}[T \restriction (t_i^n, u_i^n)] - t_i^n + t_0^n$ is an $I_{G_0}^t$-positive set.
    
    \end{itemize}
    
    Assume $(t_i^n)_{i \in 2^n}, (u_i^n)_{i < k_n}$ and $A^n$ have been constructed. 
    
    Let $m \doteq |t_0^n|$ and complete the level $2^m$ with $\left\{t_{k_n}^n, ..., t_{2^m - 1}^n\right\}$ --- i.e.,  $2^m = \left\{t_0^n, ..., t_{k_n -1}^n, t_{k_n}^n, ..., t_{2^m - 1}^n\right\}$. We proceed using successive induction in $2^m$ steps:
    
    Assume we have defined $n_{j-1} \in \omega$, $A_{j-1} \notin I_{G_0}^t$ and, finite trees $p^{n_{j-1}}$ and $q^{n_{j-1}}$, with endnodes $\left\{t_0^{n_{j-1}}, ..., t_{\ell_n - 1}^{n_{j-1}}\right\}$ and $\left\{u_0^{n_{j-1}}, ..., u_{\ell_n - 1}^{n_{j-1}}\right\}$, respectively, satisfying the four items above. 
    
    Let $n_j > n_{j-1}$ and $A_j \subseteq A_{j-1}$ be as in Claim \ref{fatclaim} --- i.e.,  $A_j + 1_{n_j} \subseteq A_{j-1}$; and $s_{n_j} \subseteq \mathrm{st}(A_j + t_j^n)$. For every $i < \ell_n^{n_{j-1}}$,  let $A_j \ast t^{n_{j-1}}_i \subseteq A_{j-1}$ denote the copy of $A_j$, inside $A_{j-1}$ above $t_{i}^{n_{j-1}}$ and let $t^{n_j}_{2i} = \mathrm{st} (A_j \ast t^{n_{j-1}}_i)$ and $t^{n_j}_{2i+1} = \mathrm{st}(A_j \ast t^{n_{j-1}}_i + 1_{n_j})$.  
    Moreover, for each $i < \ell_n^{n_{j-1}}$, choose $u_i^{n_{j-1}}$ such that $(t_i^{n_{j-1}}, u_i^{n_{j-1}}) \in T$ and the set $A^{n_j}$ is $I_{G_0}^t$-positive. Now $p^{n_j}$ is the finite tree generated by downward closure of the nodes $t_i^{n_j}$, for $i < \ell_n^{n_{j-1}}$, and similarly for $q^{n_j}$ (with $u$'s instead of $t$'s).
    
    Let $p^{n+1} = p^{n_{(2^m - 1)}}$ and $q^{n+1} = q^{n_{(2^m - 1)}}$ and note that $p^{n+1}$ satisfies both $G_0$-fatness and Silverness. Then
    \[p = \bigcup_{n \in \omega} p^n\]
    is the desired fat $G_0$-tree --- i.e., $[p] \subseteq A$. \qedhere
\end{proof}

The proof of the theorem above already hints a proof of properness for $\mathbb{G}_0$ and, in fact, $\omega^\omega$-boundedness (which implies the preservation of $\mathfrak{d}$). We soon will see that we are actually not far from proving the \textit{Sacks property} as well.

\begin{lem} $\mathbb{G}_0$ satisfies strong Axiom A. 
\end{lem}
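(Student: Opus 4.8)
The plan is to mimic the strong Axiom A argument for Silver forcing, adapting it to the extra $G_0$-fatness requirement. The partial orders $\leq_n$ on $\mathbb{G}_0$ will be the usual ones induced from the perfect-tree orders: $q \leq_n p$ iff $q \leq p$ and $L_n(q) = L_n(p)$, i.e. the first $n$ splitting levels of $q$ and $p$ coincide. Property (1), the existence of fusions, is essentially formal: given a fusion sequence $(p_n)_{n\in\omega}$ with $p_{n+1}\leq_n p_n$, set $q=\bigcap_{n\in\omega}p_n$. As in the Silver case, $q$ is a Silver tree because each splitting level is eventually frozen, and $G_0$-fatness of $q$ is inherited from the $p_n$: a splitting node $s\in\mathrm{spl}(q)$ already appears as a splitting node of some $p_n$, and since $[q_s]\subseteq[(p_n)_s]$ one must check that $G_0$-fatness passes to the subtree $q_s$. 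Here the key point — which I expect to be the main obstacle — is that $G_0$-fatness is \emph{not} automatically preserved under shrinking, so the fusion construction must be set up so that the relevant fatness witnesses are preserved along the tower; this forces us to be careful in verifying (2) below, and to make the amalgamation steps produce subtrees that still contain the $G_0$-pairs witnessing fatness.

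For property (2), fix $A\subseteq\mathbb{G}_0$, a condition $p$, and $n\in\omega$. The strategy is to prune below each of the $2^n$ branches leaving the $n$-th splitting level $L_n(p)$, one node $\sigma^*$ at a time, so that the resulting subtree restricted to that node is either incompatible with all of $A$ or below a single element of $A$; running through all $2^n$ nodes by successive amalgamation (exactly as in the proof of Lemma~\ref{lemdense}, keeping $L_n$ fixed) yields $q\leq_n p$ compatible with at most $2^n$ many elements of $A$, which is finitely many, so in fact we get \emph{strong} Axiom A. The nontrivial ingredient at each single-node step is: given a $G_0$-tree $r$, either some $r'\leq r$ with $\mathrm{st}(r')\supsetneq\mathrm{st}(r)$ (so $[r']$ lies entirely above a chosen immediate successor direction) is incompatible with all of $A$, or there is $r''\leq r$, still a $G_0$-tree, with $[r'']$ contained in $[r]$ below the chosen node, and $r''\leq a$ for some $a\in A$. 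This is where Claim~\ref{fatclaim} (or rather its proof) does the work: it lets us, inside any $I_{G_0}^t$-positive analytic set — equivalently, by the density lemma, inside the branch set of any $G_0$-tree — locate a further $G_0$-subtree with prescribed stem behaviour, which is exactly what amalgamation needs in order to re-establish both Silverness and $G_0$-fatness after shrinking.

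Concretely I would carry out the steps in this order. First, state the $\leq_n$ and verify the purely combinatorial facts: $\leq_n$ are transitive, $q\leq_{n+1}p\Rightarrow q\leq_n p$, and $q\leq_0 p\Rightarrow q\leq p$. Second, prove the fusion property (1), with explicit attention to why $\bigcap_n p_n$ remains a $G_0$-tree — invoking that each splitting node of the intersection is a splitting node of some $p_n$ together with the fatness of that $p_n$ restricted appropriately, noting that the $\leq_n$-tower guarantees the subtree $(\,\bigcap_m p_m)_{\sigma^*}$ still contains enough of $[(p_n)_{\sigma^*}]$ to carry a fatness witness; if bare preservation fails, strengthen the notion of fusion sequence to demand that fatness witnesses below each fixed splitting node are stabilized, which is harmless since there are only countably many nodes. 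Third, prove (2) in the strong form: fix $p,n,A$, enumerate $L_n(p)=\{\sigma_0^*,\dots,\sigma_{2^n-1}^*\}$, and by successive amalgamation build $q=q_{2^n-1}\leq_n q_{2^n-2}\leq_n\cdots\leq_n q_0=p$ where at step $j$ we either find $q_j\ast\sigma_j$ incompatible with all of $A$ (by pushing up a stem, using $G_0$-fatness to ensure the result is still a $G_0$-tree via the density lemma) or find $q_j\ast\sigma_j$ below a single member of $A$. Then $q$ is compatible with at most one element of $A$ above each $\sigma_j^*$, hence at most $2^n$ in total. Conclude that $(\leq_n)_{n\in\omega}$ witnesses strong Axiom A for $\mathbb{G}_0$. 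The genuinely delicate part throughout is bookkeeping the $G_0$-fatness through the amalgamations and through the intersection; everything else is a routine transcription of the Silver-forcing argument.
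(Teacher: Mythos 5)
Your overall architecture (define $\leq_n$'s, fuse, and handle (2) by successive amalgamation along the $n$-th level, which gives finitely many compatibilities and hence \emph{strong} Axiom A) matches the paper's. But there is a genuine gap at exactly the point you flag and then defer: the choice of the orders $\leq_n$. With the plain Silver orders ($q\leq_n p$ iff $q\leq p$ and $L_n(q)=L_n(p)$), clause (1) of Axiom A fails for $\mathbb{G}_0$: a fatness witness for a splitting node $s$ and a shift $t$ is a pair $(s_k{}^\smallfrown 0^\smallfrown x,\ s_k{}^\smallfrown 1^\smallfrown x)$ in $([p_s]+t)^2$, and along a $\leq_n$-tower the available witnesses can have $k\to\infty$; their limit lies on the diagonal, which is not in $G_0$, so $\bigcap_n p_n$ need not be a $G_0$-tree. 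Your proposed repair --- ``strengthen the notion of fusion sequence to demand that fatness witnesses are stabilized'' --- is not an admissible fix as stated, because clause (1) of Axiom A must hold for \emph{every} $\leq_n$-decreasing sequence; the stabilization has to be built into the orders themselves. This is precisely what the paper does: it replaces the $n$-th splitting level by a custom finite approximation $p^n$, where $p^{n+1}$ is generated by $p^n$ together with all nodes of $p$ up to height $n_t$ for each shift $t$ of length $\mathrm{ht}(p^n)$, $n_t$ being least with $s_{n_t}$ a splitting node of the shifted subtree. Since for a \emph{Silver} tree a splitting node of $p_s+t$ of the form $s_{n_t}$ automatically yields a fatness witness (the two subtrees above it are identical), freezing $p^n$ freezes the witnesses, and fusion goes through. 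So the ``genuinely delicate part'' you identify is in fact the entire content of the lemma, and your writeup leaves it unexecuted.

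Two smaller points. First, in step (2) the amalgamation must be the uniform, Silver-style one: replacing the subtree above a single node of $L_n$ by a stronger condition destroys Silverness, so the shrink has to be copied to all other nodes of that level via the corresponding shifts; one then checks that $G_0$-fatness survives this copying, which it does because the fatness condition quantifies over \emph{all} shifts $t\in 2^{|s|}$ and is therefore shift-equivariant. Second, your appeal to Claim \ref{fatclaim} to find $G_0$-subtrees with prescribed stems inside $[r]$ presupposes that $[r]$ is $I_{G_0}^t$-positive for every $G_0$-tree $r$; this is true but is the converse direction of the density lemma and needs at least a sentence. Neither of these is fatal, but the first is the same uniformity bookkeeping that the paper's redefined $\leq_n$ is designed to make painless.
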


\begin{proof} First, for each $n \in \omega$, we build a finite subtree $p^n$ of $p$, with a set of terminal nodes $L_n (p)$, as follows:

Let $p^0 = \{\mathrm{st}(p)\}$ and assume we have defined $p^{n}$ with terminal nodes $L_n (p) = \left\{t_0, ..., t_{k_n - 1}\right\}$, where $\mathrm{ht}(p^n) = |t_0| = ... = |t_{k_n} - 1| \doteq \ell_n$. For each $t \in 2^{\ell_n}$, let $n_t$ be the least natural number for which $s_{n_t}$ is a splitting node of $(p_{t_0}) + t$. Let $p^{n+1}$ be the finite tree generated by the splitting nodes of $p^n$, together with all the nodes of $p$ of height $n_t$, for $t \in 2^{\ell_n}$. Note that \[|L_{n+1} (p)| = |L_n (p)| \cdot 2^{2^{\mathrm{ht}\left(p^n\right)}}.\]
Finally, for $q \in \mathbb{G}_0$,

\[q \leq_n p \leftrightarrow q \leq p \text{ and } q^n = p^n.\]

The item (1) of the statement of the strong Axiom A is easily seen to be satisfied. As for (2), let $L_n (p) = \left\{t_0, ..., t_{k_n-1}\right\}$. We shall define a finite sequence $(q_i)_{i < k_n}$ using successive amalgamation:

Assume we have defined $q_{i-1} \leq_n p$ such that $q_{i-1} \ast \sigma_j$ is compatible with at most one element of $A$, for every for $i < k_n$ and $j < i$. Let $r_i \leq q_{i-1} \ast \sigma_i$ be a condition compatible with at most one element of $A$ and define $q_i$ to be the amalgamation of $r_i$ into $q_{i-1}$. Now it is easy to see that $q = q_{k_n-1}$ is such that $q \leq_n p$ and it is compatible with at most $k_n$ elements of $A$. \qedhere

\end{proof}

\begin{cor} It is consistent with ZFC that $\mathfrak{d} < \chi_B (G_0)$. 
\end{cor}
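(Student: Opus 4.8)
The plan is to obtain the consistency of $\mathfrak{d} < \chi_B(G_0)$ by forcing over a model of CH with the countable support iteration of $\mathbb{G}_0$ of length $\omega_2$. First I would note that the previous two lemmas have done the essential work: $\mathbb{G}_0$ satisfies strong Axiom~A, hence it is proper and $\omega^\omega$-bounding, and proper $\omega^\omega$-bounding forcings are preserved under countable support iteration (this is the standard preservation theorem for $\omega^\omega$-bounding proper forcing). Thus in the iterated extension $\omega^\omega$ of the ground model remains dominating, so $\mathfrak{d}^V$ stays a dominating family, and since $|\omega^\omega \cap V| = \aleph_1$ by CH we get $\mathfrak{d} = \aleph_1$ in the extension.

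Next I would establish the lower bound $\chi_B(G_0) \geq \aleph_2$ in the extension. The key point is that the map $p \mapsto [p]$ is a dense embedding of $\mathbb{G}_0$ into the forcing of Borel $I_{G_0}^t$-positive sets: the preceding lemma shows every $I_{G_0}^t$-positive analytic set contains the branches of a $G_0$-tree, and conversely $[p] \notin I_{G_0}^t$ for every $G_0$-tree $p$ by $G_0$-fatness (a translate of $[p_s]$ that landed in $I_{G_0}$ would contradict fatness via the $G_0$-dichotomy / inner approximation property of $I_{G_0}$). Consequently forcing with $\mathbb{G}_0$ adds a real avoiding every ground-model Borel $I_{G_0}^t$-positive set, equivalently a real not covered by any ground-model Borel $G_0$-independent set (up to a rational shift, which does not affect being covered by $\aleph_1$-many Borel independent sets since the ideal $I_{G_0}^t$ is shift-invariant). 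A standard reflection/genericity argument for the $\omega_2$-length countable support iteration then shows that no family of $\aleph_1$-many Borel $G_0$-independent sets appearing in the extension can cover $2^\omega$: any such family is (by properness and the $\aleph_2$-c.c., which holds since CH in the ground model gives $|\mathbb{G}_\alpha| \leq \aleph_1$ at intermediate stages) captured at some stage $\alpha < \omega_2$, and the $G_0$-tree generic added at stage $\alpha$ produces a real in $2^\omega$ avoiding all of them. Hence $\cov(I_{G_0}^t) \geq \aleph_2$, and since $\cov(I_{G_0}^t) \leq \chi_B(G_0)$ we conclude $\chi_B(G_0) \geq \aleph_2 > \aleph_1 = \mathfrak{d}$.

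The main obstacle I anticipate is the preservation argument at limit stages: I must be careful that the countable support iteration of $\mathbb{G}_0$ remains $\omega^\omega$-bounding (so that $\mathfrak{d}$ stays small) while simultaneously each iterand genuinely adds a real escaping all previously-added Borel $G_0$-independent sets and this property is not destroyed by the tail of the iteration. For the first part one invokes the Shelah preservation theorem for proper $\omega^\omega$-bounding forcing; for the second one argues that the generic real of $\mathbb{G}_0$ at stage $\alpha$, being an element of $2^\omega$, either lies in a ground-model-of-stage-$\alpha$ Borel $G_0$-independent set or it does not, and the density of $D$-type sets (the fatness condition) guarantees it does not — and being $I_{G_0}^t$-positive-avoiding is an absolute property of that real relative to the finitely or countably many codes involved, so later stages cannot reverse it. A clean way to package the whole argument is simply to cite, in place of a bespoke iteration argument, that $\mathbb{G}_0$ together with strong Axiom~A and $\omega^\omega$-boundedness fits the framework already used for $\mathbb{E}_0$ and $\mathbb{V}$ in Theorem~\ref{silverthm}, so that the $\mathbb{G}_0$-model of length $\omega_2$ over a model of CH satisfies $\mathfrak{d} = \aleph_1 < \aleph_2 \le \chi_B(G_0)$.

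\begin{proof}
Force over a model of CH with the countable support iteration of $\mathbb{G}_0$ of length $\omega_2$. Since $\mathbb{G}_0$ satisfies strong Axiom~A (previous lemma), it is proper and $\omega^\omega$-bounding, and these properties are preserved by countable support iterations of proper forcings; hence in the extension the ground-model reals form a dominating family, so $\mathfrak{d} = |2^{\aleph_0} \cap V| = \aleph_1$. On the other hand, $p \mapsto [p]$ is a dense embedding of $\mathbb{G}_0$ into the forcing of Borel $I_{G_0}^t$-positive sets, so each iterand adds a real not contained in any ground-model-coded Borel $I_{G_0}^t$-positive set. By properness and the $\aleph_2$-chain condition (which holds as intermediate iterands have size $\le \aleph_1$ under CH), any family of $\aleph_1$-many Borel $G_0$-independent sets in the final model occurs by some stage $\alpha < \omega_2$, and the generic real added at stage $\alpha$ avoids all of them. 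Thus $\cov(I_{G_0}^t) \ge \aleph_2$, and since $\cov(I_{G_0}^t) \le \chi_B(G_0)$ we get $\chi_B(G_0) \ge \aleph_2 > \aleph_1 = \mathfrak{d}$.
\end{proof}
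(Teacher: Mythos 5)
Your proposal is correct and follows exactly the route the paper intends (the paper leaves the corollary's proof implicit after the strong Axiom~A lemma): iterate $\mathbb{G}_0$ with countable support of length $\omega_2$ over a model of CH, use properness and $\omega^\omega$-bounding (preserved by the standard Shelah preservation theorem) to keep $\mathfrak{d}=\aleph_1$, and use the dense embedding $p \mapsto [p]$ into the Borel $I_{G_0}^t$-positive sets together with reflection and the $\aleph_2$-c.c.\ to push $\cov(I_{G_0}^t) \leq \chi_B(G_0)$ up to $\aleph_2$. The only point handled slightly loosely on both sides is the verification that $[p] \notin I_{G_0}^t$ for every $G_0$-tree $p$ (needed for the embedding to land in the positive sets), but this is as implicit in the paper as in your write-up.
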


Now let us see what happens to $\cov(\mathcal{N})$. Of course a natural way of showing that this forcing notion does not add random reals is by proving the \textit{Laver property}. However, since this forcing notion is $\omega^\omega$-bounding, the junction of these properties --- the Sacks property --- gives us something stronger. Namely, $\cof(\mathcal{N})$ --- the least cardinality of a basis for $\mathcal{N}$, and the biggest cardinal at the \textit{Cichoń's diagram} --- has the value of the ground-model continuum.

For a function $f \in \omega^\omega$, recall that an \textit{$f$-slalom} is a function $S: \omega \rightarrow [\omega]^{<\omega}$ such that $|S(n)| \leq f(n)$, for all $n \in \omega$. An element $x \in \omega^\omega$ is covered by $S$ iff $x(n) \in S(n)$, for almost all $n \in \omega$. Due to a characterization of Bartoszy\'{n}ski (see \cite{bartoszynski1995set}), $\cof(\mathcal{N})$ is the least cardinality of a family of $f$-slaloms whose union covers $\omega^\omega$, for some $f \in \omega^\omega$.

Finally, say that a forcing notion $\mathbb{P}$ has the \textit{Sacks property} iff every element of $\omega^\omega$, in the generic extension, is covered by a ground-model $f$-slalom. Clearly, forcing notions with the Sacks property do not increase $\cof(\mathcal{N})$.

\begin{thm} $\mathbb{G}_0$ has the Sacks property.
\end{thm}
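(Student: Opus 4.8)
The plan is to prove the Sacks property directly by a fusion argument, closely mirroring the structure of the proof that $\mathbb{G}_0$ satisfies strong Axiom A. Fix a condition $p \in \mathbb{G}_0$, a name $\dot{x}$ for an element of $\omega^\omega$, and a function $g \in \omega^\omega$ growing fast enough (for the Sacks property it suffices to produce, for each $p$ and $\dot x$, \emph{some} ground-model $f$-slalom; choosing $f(n) = |L_n(p)|$ works, since the finite trees $p^n$ from the strong Axiom A proof have terminal node sets $L_n(p)$ of known, ground-model-computable size). I would build a fusion sequence $(p_n)_{n\in\omega}$, witnessed by $(\leq_n \restriction \mathbb{G}_0)$, together with finite sets $S(n) \subseteq \omega$, such that $p_{n+1} \le_n p_n$ and $p_{n+1} \Vdash \dot{x}(n) \in S(n)$ with $|S(n)| \le |L_n(p)| = |L_n(p_n)|$.

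The key step is the inductive one: given $p_n$ with $L_n(p_n) = \{t_0,\dots,t_{k_n-1}\}$, I would use successive amalgamation over the $k_n$ restrictions $p_n \ast \sigma_i$ (in the sense of the strong Axiom A proof, restricting to the $i$-th terminal node of the finite tree $p_n^{\,n}$). For each $i < k_n$, since $\dot x(n)$ is a name for a natural number, I can find $r_i \le p_n \ast \sigma_i$ deciding $\dot x(n)$, say $r_i \Vdash \dot x(n) = m_i$. Amalgamating the $r_i$ back into $p_n$ yields $q \le_n p_n$ with $q \Vdash \dot x(n) \in \{m_0, \dots, m_{k_n-1}\}$, a set of size at most $k_n = |L_n(p)|$; set this to be $S(n)$ and let $p_{n+1} = q$. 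One must check that the amalgamation stays inside $\mathbb{G}_0$ — i.e. that $G_0$-fatness and Silverness are preserved — but this is exactly the amalgamation lemma already used (implicitly) in the strong Axiom A proof, so I would quote that. The fusion $q_\infty = \bigcap_n p_n$ is then a condition forcing $\dot x(n) \in S(n)$ for \emph{all} $n$, with $|S(n)| \le f(n)$ for the ground-model function $f(n) = |L_n(p)|$; hence $S$ is the required ground-model $f$-slalom covering $\dot x$.

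Since $|L_{n+1}(p)| = |L_n(p)| \cdot 2^{2^{\mathrm{ht}(p^n)}}$ grows very fast, the slalom widths $f(n)$ grow fast too; this is harmless for the Sacks property (which only asks for \emph{some} $f$), but if one wants the cleaner statement with a fixed $f$ independent of $p$ one should instead slow down the bookkeeping: only require $p_{n+1}$ to decide $\dot x(n)$ after having split enough times that $|L_n(p_n)|$ is controlled, or equivalently re-index so that $\dot x(n)$ is decided along the fusion only once $k_n \ge$ the desired bound — a routine reorganization. The main obstacle I anticipate is purely bookkeeping-theoretic: one must be careful that the amalgamation used to decide $\dot x(n)$ genuinely produces a $\le_n$-extension (so that the fusion exists) and that the finite approximating trees $q^n$ are unchanged, which forces the restrictions $q \ast \sigma_i$ to be taken precisely along the $k_n$ terminal nodes $L_n(p_n)$ rather than along an arbitrary antichain; combined with $\omega^\omega$-bounding (already established via the properness/boundedness argument), the Sacks property follows. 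Finally, as noted in the excerpt, the Sacks property together with $\omega^\omega$-bounding gives that $\cof(\mathcal{N})$ equals the ground-model continuum in the $\mathbb{G}_0$-model, and in particular $\mathbb{G}_0$ adds no random reals, so $\cov(\mathcal{N})$ is not increased.
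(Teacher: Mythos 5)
Your argument is correct and is essentially the intended one: the paper actually states this theorem without giving a proof, and your fusion--successive-amalgamation argument is exactly the quantitative refinement of the paper's strong Axiom A proof (decide $\dot x(n)$ below each of the $k_n=|L_n(p)|$ terminal nodes of $p^n$, amalgamate to get a $\leq_n$-extension forcing $\dot x(n)$ into a set of size $\leq k_n$, then fuse), and your remark about re-blocking via $\omega^\omega$-boundedness to replace the $p$-dependent ground-model width $n\mapsto |L_n(p)|$ by a fixed one is the standard normalization. The one step you defer --- that amalgamating $r_i$ into $q_{i-1}$ stays inside $\mathbb{G}_0$, i.e.\ preserves $G_0$-fatness and Silverness and really yields a $\leq_n$-extension --- is precisely the step the paper also leaves implicit in its strong Axiom A proof, so nothing is missing relative to the source.
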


The next colorally follows from the properness of $\mathbb{G}_0$, together with the preservation of the Sacks property under countable support iterations of proper forcing notions.

\begin{cor} It is consistent with ZFC that $\cof(\mathcal{N}) < \chi_B (G_0)$.
\end{cor}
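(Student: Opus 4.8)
The plan is to build the model by a countable support iteration and read off the two cardinal values from the preservation properties already established. I start with a ground model $V \models \mathrm{CH}$ and let $\mathbb{P}_{\omega_2}$ be the countable support iteration of $\mathbb{G}_0$ of length $\omega_2$. Since $\mathbb{G}_0$ is proper, so is $\mathbb{P}_{\omega_2}$, and countable support iterations of proper forcing notions of length $\leq \omega_2$ over a model of $\mathrm{CH}$ satisfy the $\aleph_2$-chain condition; hence $\mathbb{P}_{\omega_2}$ preserves all cardinals and cofinalities and forces $2^{\aleph_0} = \aleph_2$. Write $G$ for a $\mathbb{P}_{\omega_2}$-generic filter over $V$. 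It remains to compute $\cof(\mathcal{N})$ and $\chi_B(G_0)$ in $V[G]$.

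For the upper bound on $\cof(\mathcal{N})$, I would invoke the preservation theorem for the Sacks property. Each iterand $\mathbb{G}_0$ has the Sacks property by the preceding theorem, and the Sacks property is preserved under countable support iterations of proper forcing notions; therefore $\mathbb{P}_{\omega_2}$ itself has the Sacks property over $V$. Consequently every $x \in \omega^\omega \cap V[G]$ is covered by a ground-model $f$-slalom, so any family of slaloms that covers $\omega^\omega$ in $V$ still covers $\omega^\omega$ in $V[G]$. By the Bartoszy\'{n}ski characterization such a covering family of size $\cof(\mathcal{N})^V = \aleph_1$ exists in $V$ (using $\mathrm{CH}$), whence $\cof(\mathcal{N})^{V[G]} \leq \aleph_1$; as $\cof(\mathcal{N})$ is always uncountable, $\cof(\mathcal{N}) = \aleph_1$ in $V[G]$.

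For the lower bound on $\chi_B(G_0)$, recall that $\cov(I_{G_0}^t) \leq \chi_B(G_0)$, so it suffices to prove $\cov(I_{G_0}^t) = \aleph_2$ in $V[G]$. By the dense embedding $p \mapsto [p]$ of $\mathbb{G}_0$ into the poset of Borel $I_{G_0}^t$-positive sets, the $\mathbb{G}_0$-generic real avoids every $I_{G_0}^t$-small Borel set coded in the ground model of the step at which it is added. Now fix in $V[G]$ any family $\{B_\xi : \xi < \omega_1\}$ of Borel $I_{G_0}^t$-small sets. Using the $\aleph_2$-chain condition together with $\cf(\omega_2) = \omega_2 > \omega_1$, all Borel codes of the $B_\xi$ appear at a single intermediate stage $\alpha < \omega_2$, so that $\{B_\xi : \xi < \omega_1\} \subseteq V[G \restriction \alpha]$. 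The $\mathbb{G}_0$-generic real added at stage $\alpha$ then lies outside each $B_\xi$, so $\{B_\xi : \xi < \omega_1\}$ does not cover $2^\omega$. Hence $\cov(I_{G_0}^t) > \aleph_1$, and since $\cov(I_{G_0}^t) \leq \chi_B(G_0) \leq 2^{\aleph_0} = \aleph_2$, we get $\chi_B(G_0) = \aleph_2$ in $V[G]$. Combining the two computations yields $\cof(\mathcal{N}) = \aleph_1 < \aleph_2 = \chi_B(G_0)$, as required.

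The main obstacle is the lower-bound step, and in particular the genericity claim that the real added by a single copy of $\mathbb{G}_0$ escapes all ground-model $I_{G_0}^t$-small Borel sets. This rests on the identification of $\mathbb{G}_0$ with the idealized forcing of Borel $I_{G_0}^t$-positive sets via $p \mapsto [p]$: under this identification the generic real is, by construction, contained in every positive Borel set in the generic filter and therefore misses every small Borel set coded in the ground model. One must also ensure that the reflection of an $\aleph_1$-sized family of Borel codes down to a single intermediate stage genuinely follows from the $\aleph_2$-chain condition of the countable support iteration, which is the standard mechanism but deserves to be stated explicitly.
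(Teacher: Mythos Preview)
Your argument is correct and matches the paper's approach: the paper's own justification is the single sentence that the corollary ``follows from the properness of $\mathbb{G}_0$, together with the preservation of the Sacks property under countable support iterations of proper forcing notions,'' and you have simply unpacked this, together with the dense embedding of $\mathbb{G}_0$ into $\Bor(2^\omega)\setminus I_{G_0}^t$ established earlier, into the standard reflection argument.
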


\section{Questions}

As said in the Introduction, the consistency of $\chi_B (G_0) < \cov(\mathcal{N})$ is open. The same is true for $\chi_B (G_0) < \mathfrak{h}$.

\begin{quest}\label{questrandom} Is it consistent with ZFC that $\chi_B (G_0) < \cov(\mathcal{N})$? What about $\chi_B (G_0) < \mathfrak{h}$?
\end{quest}

This question really tests the role of minimality. It is well-known that the random forcing increases $\cov(\mathcal{N})$ and Mathias forcing increases $\mathfrak{h}$. Note that both random reals and Mathias reals are contained in ground-model Borel $G_0$-independent sets. The argument for the Mathias forcing is similar to the Silver. As for the random forcing: 

First, use Theorem 3.3 of \cite{miller2008measurable} to obtain three measurable $G_0$-independent sets $A_0, A_1$ and $A_2$ such that $2^\omega = A_0 \cup A_1 \cup A_2$. Now, if $p$ is a positive-measure Borel set, then for some $i < 3$, the set $p \cap M_i$ is positive-measure as well, and contains some positive-measure Borel set $q$. Hence, the random real is contained in a $G_0$-independent Borel set coded in the ground model. However, since the random extension is far from minimal, we do not know what happens to other reals. 

\begin{quest} Is it consistent with ZFC that $\chi_B (G_1) < \mathfrak{b}$? What about $\chi_B (E_0) < \mathfrak{b}$?
\end{quest}

The inequality $\chi_B (G_1) < \mathfrak{d}$ holds in the Miller model, simply because $\mathfrak{r} < \mathfrak{d}$ holds already (recall that $\chi_B (G_1) \leq \mathfrak{r}$), which follows from the preservation of $p$-points. It is likely that a proof of $\chi_B (G_1) < \mathfrak{b}$ would require a more direct argument for Laver iterations.

This question is also related to the long-standing question on the differences between Laver and Silver notions of \textit{measurability} (see \cite{silvermeasurability}).

Last, it is not clear that the forcing $\mathbb{G}_0$ satisfies the \textit{$2$-localization property} --- a stronger version of the Sacks property. We say that $\mathbb{P}$ has the $2$-localization property iff every element of $\omega^\omega$, of the generic extension, is contained in a ground-model \textit{binary} tree --- i.e., a tree on $\omega^{<\omega}$ such that every node has at most two successor nodes. 

In some cases a proof of minimality yields a proof of the $2$-localization property. This is the case, for instance, with the Sacks or Silver forcing notions. In some cases, the forcing notion adds reals of minimal degree despite, not having the $2$-localization property, as it is the case with the Laver forcing. We do not know, however, whether $\mathbb{G}_0$ has the $2$-localization property or adds reals of minimal degree. 

\begin{quest} Does $\mathbb{G}_0$ have the $2$-localization property? Does it add reals of minimal degree? 
\end{quest}

It is not clear to the authors whether the methods from \cite{grigorieff1971combinatorics} can be used for this forcing notion.

\bibliographystyle{plain}
\bibliography{bibliography}

\end{document}